\documentclass{amsart}

\usepackage{ae,aecompl}
\usepackage{esint}
\usepackage{graphicx}
\usepackage[all,cmtip]{xy}
\usepackage{amsmath, amscd}
\usepackage{booktabs}
\usepackage{amsthm}
\usepackage{amssymb}
\usepackage{amsfonts}
\usepackage{qsymbols}
\usepackage{latexsym}
\usepackage{mathrsfs}
\usepackage{cite}
\usepackage{color}
\usepackage{url}
\usepackage{enumerate}
\usepackage{undertilde}

\usepackage{verbatim}
\usepackage[draft=false, colorlinks=true]{hyperref}

\allowdisplaybreaks

\setlength{\unitlength}{1cm}
\setcounter{secnumdepth}{2}

\newcommand {\bmo}{{\mathrm{bmo}}}
\newcommand {\BMO}{{\mathrm{BMO}}}

\newcommand {\C}{{\mathbb C}}

\newcommand {\ud}{\mathrm{d}}

\newcommand {\veps}{\varepsilon}

\newcommand {\Ell}{L}

\newcommand {\F}{{\mathcal{F}}}

\newcommand {\Hrinf}{\mathcal{H}^{r,\infty}}
\newcommand {\HT}{\mathcal{H}}
\newcommand {\Hp}{\mathcal{H}^{p}_{FIO}(\Rn)}
\newcommand {\Hps}{\mathcal{H}^{s,p}_{FIO}(\Rn)}
\newcommand {\Hpt}{\mathcal{H}^{t,p}_{FIO}(\Rn)}

\newcommand {\rb}{\rangle}
\newcommand {\lb}{{\langle}}
\newcommand {\La}{{\mathcal{L}}}

\newcommand {\N}{{{\mathbb N}}}

\newcommand {\ph}{{\varphi}}

\newcommand {\R}{{\mathbb R}}

\newcommand {\Rn}{{\mathbb{R}^{n}}}

\newcommand {\supp}{{\mathrm{supp}}}

\newcommand {\Sw}{\mathcal{S}}

\newcommand {\w}{{\omega}}

\newcommand {\Z}{{{\mathbb Z}}}

\newcommand {\vanish}[1]{\relax}

\newcommand{\wh}{\widehat}
\newcommand{\wt}{\widetilde}

\DeclareFontFamily{U}{mathx}{\hyphenchar\font45}
\DeclareFontShape{U}{mathx}{m}{n}{
      <5> <6> <7> <8> <9> <10>
      <10.95> <12> <14.4> <17.28> <20.74> <24.88>
      mathx10
      }{}
\DeclareSymbolFont{mathx}{U}{mathx}{m}{n}
\DeclareFontSubstitution{U}{mathx}{m}{n}
\DeclareMathAccent{\widecheck}{0}{mathx}{"71}

\newtheorem{theorem}{Theorem}[section]
\newtheorem{lemma}[theorem]{Lemma}
\newtheorem{proposition}[theorem]{Proposition}
\newtheorem{corollary}[theorem]{Corollary}

\theoremstyle{definition}
\newtheorem{definition}[theorem]{Definition}
\newtheorem{remark}[theorem]{Remark}

\numberwithin{equation}{section}
\protected\def\ignorethis#1\endignorethis{}
\let\endignorethis\relax

\title[Rough pseudodifferential operators on Hardy spaces for FIOs]{Rough pseudodifferential operators on Hardy spaces for Fourier integral operators}

\author{Jan Rozendaal}
\address{Institute of Mathematics, Polish Academy of Sciences\\
ul.~\'{S}niadeckich 8\\
00-656 Warsaw\\
Poland}
\email{jrozendaal@impan.pl}

\keywords{Rough pseudodifferential operators, Hardy spaces, Fourier integral operators, paradifferential operators}

\subjclass[2020]{Primary 35S05. Secondary 42B35, 35S30, 35S50}

\thanks{This research was supported by ARC grant DP160100941, and partially supported by NCN grant UMO2017/27/B/ST1/00078. The research leading to these results has received funding from the Norwegian Financial Mechanism 2014-2021, grant 2020/37/K/ST1/02765.}

\begin{document}

\begin{abstract}
We prove mapping properties of pseudodifferential operators with rough symbols on Hardy spaces for Fourier integral operators. The symbols $a(x,\eta)$ are elements of $C^{r}_{*}S^{m}_{1,\delta}$ classes that have limited regularity in the $x$ variable. We show that the associated pseudodifferential operator $a(x,D)$ maps between Sobolev spaces $\Hps$ and $\Hpt$ over the Hardy space for Fourier integral operator $\Hp$. Our main result implies that for $m=0$, $\delta=1/2$ and $r>n-1$, $a(x,D)$ acts boundedly on $\Hp$ for all $p\in(1,\infty)$.
\end{abstract}

\maketitle

\section{Introduction}

\subsection{Setting}
Pseudodifferential operators have long been a powerful tool for the analysis of partial differential equations. For example, they appear as parametrices for various equations. They also arise naturally through paradifferential calculus, where one collects the frequencies of a differential operator into groups. In fact, the combination of these two tools, parametrices and paradifferential calculus, has shown itself to be particularly potent when studying equations with rough coefficients. Most relevant for the present article are wave equations with rough coefficients, which arise for example from Laplace--Beltrami operators on manifolds with a rough metric, or by linearizing nonlinear wave equations with rough initial data. 

Given a differential operator $A$ with rough coefficients, one can apply a paradifferential smoothing procedure that goes back to Bony~\cite{Bony81} (see also e.g.~Meyer~\cite{Meyer81a,Meyer81b} and Taylor~\cite{Taylor91,Taylor00}) to decompose $A$ as a sum of a smooth pseudodifferential operator $A_{1}$ and a rough pseudodifferential operator $A_{2}$ of lower order. Microlocal parametrices can then be constructed for the pseudodifferential equation $(\partial_{t}^{2}-A_{1})u(t)=0$, and by obtaining suitable mapping properties for $A_{2}$ one can use Duhamel's principle to obtain a parametrix for the rough wave equation $(\partial_{t}^{2}-A)u(t)=0$ as well. The resulting parametrix then contains a lot of information about the rough equation, while also being more tractable than the bona fide solution operators to the equation. A specific instance of this paradigm was developed by Smith in~\cite{Smith98b}, and subsequently applied by both Smith and Tataru to obtain powerful results for wave equations with rough coefficients, such as Strichartz estimates~\cite{Smith98b,Tataru00,Tataru01,Tataru02}, propagation of singularities~\cite{Smith14}, the related spectral cluster estimates~\cite{Smith06}, and well-posedness of nonlinear wave equations with rough initial data \cite{Smith-Tataru05}.

In these applications, it suffices to obtain mapping properties between Sobolev spaces over $L^{2}(\Rn)$ for the rough pseudodifferential term. Such mapping properties are classical, even in the more general setting of $L^{p}(\Rn)$-based Sobolev spaces \cite{Bourdaud82,Marschall88,Taylor91}. However, when considering the fixed-time $L^{p}$ regularity of wave equations with rough coefficients, it no longer suffices to obtain mapping properties in the $L^{p}(\Rn)$ scale, as will be explained next.

It has long been known that even smooth wave equations do not preserve the $L^{p}$ regularity of initial data. In fact, for the classical wave propagator one has 
\begin{equation}\label{eq:classicalwave}
e^{it\sqrt{-\Delta}}:W^{2s(p),p}(\Rn)\to L^{p}(\Rn)
\end{equation}
for $1<p<\infty$, $t\in\R$ and $s(p):=\frac{n-1}{2}|\frac{1}{p}-\frac{1}{2}|$, as was shown by Peral~\cite{Peral80} and Miyachi~\cite{Miyachi80a}, and the exponent $2s(p)$ cannot be improved. General wave equations with smooth coefficients have the same fixed-time $L^{p}$ regularity, as is a consequence of the work of Seeger, Sogge and Stein from 1991~\cite{SeSoSt91} on the $L^{p}$ regularity of Fourier integral operators. Moreover, for $p=1$ and $p=\infty$ one obtains corresponding estimates upon replacing $L^{p}(\Rn)$ by the local Hardy space $\HT^{1}(\Rn)$ and $\bmo(\Rn)$, respectively. To the author's best knowledge, since then, essentially no progress has been made on determining the optimal $L^{p}$ regularity for a general class of wave equations with rough coefficients.  

One reason for the dearth of results on the fixed-time $L^{p}$ regularity for wave equations with rough coefficients might be that, in general, iterative approximation arguments are not useful for wave equations on the $L^{p}(\Rn)$ scale for $p\neq 2$. Indeed, due to the unboundedness of Fourier integral operators on $L^{p}(\Rn)$, approximation arguments that involve iterative constructions typically lead to a loss of regularity in each iteration step, and such constructions only work for infinitely smooth initial data. However, in~\cite{Smith98a} Smith introduced a powerful replacement of the local Hardy space $\HT^{1}(\Rn)$ that is adapted to Fourier integral operators. More precisely, his space $\HT^{1}_{FIO}(\Rn)$ is invariant under suitable Fourier integral operators of order zero, and in particular under smooth wave propagators, and it satisfies Sobolev embeddings that allow one to recover the results of Seeger, Sogge and Stein.

With an eye towards the fixed-time $L^{p}$ regularity of wave equations with rough coefficients, in~\cite{HaPoRo20} Hassell, Portal and the author extended Smith's construction to a scale $(\Hp)_{1\leq p\leq \infty}$ of Hardy spaces for Fourier integral operators. These spaces are invariant under compactly supported Fourier integral operators of order zero associated with a canonical transformation, and they satisfy the Sobolev embeddings 
\begin{equation}\label{eq:Sobolev1}
W^{s(p),p}(\Rn)\subseteq \Hp\subseteq W^{-s(p),p}(\Rn)
\end{equation}
for $1<p<\infty$, with appropriate modifications for $p=1$ and $p=\infty$. By considering the Sobolev space $\HT^{s(p),p}_{FIO}(\Rn)=\lb D\rb^{-s(p)}\Hp$ over $\Hp$, one directly recovers \eqref{eq:classicalwave} and the optimal $L^{p}$ regularity of Fourier integral operators. 

However, beyond merely recovering existing results, the Hardy spaces for Fourier integral operators allow for iterative constructions that were not available before. In particular, one can use the combination of paradifferential calculus and microlocal parametrices to prove that wave equations with rough coefficients are solvable over $\Hp$. Via \eqref{eq:Sobolev1}, one then obtains as a corollary the optimal $L^{p}$ regularity for such equations. For this strategy to work, one has to construct a parametrix for a smooth pseudodifferential equation on $\Hp$, and determine the mapping properties between Sobolev spaces over $\Hp$ of rough pseudodifferential operators. In the present article we consider the latter problem. The results from this article were subsequently used by Hassell and the author in the companion paper~\cite{Hassell-Rozendaal23} to obtain the first results on the optimal fixed-time $L^{p}$ regularity for a general class of wave equations with rough coefficients. We also note that in~\cite{Frey-Portal20}, invariant spaces for Fourier integral operators were used in a different manner to obtain results for wave equations with specific rough coefficients.

\subsection{Main results}
Our results are formulated in terms of a class $C^{r}_{*} S^{m}_{1,\delta}$ of rough symbols. Symbols $a(x,\eta)$ in $C^{r}_{*} S^{m}_{1,\delta}$ behave like elements of H\"{o}rmander's $S^{m}_{1,\delta}$ class, except that they have limited regularity in the $x$ variable (see Definition \ref{def:rough}). For $r\notin\N$ one requires $C^{r}(\Rn)$ regularity, while for $r\in\N$ slightly less is assumed. One associates with such a symbol $a$ the pseudodifferential operator $a(x,D)$ given by
\[
a(x,D)f(x)=\int_{\Rn}e^{ix\cdot\eta}a(x,\eta)\wh{f}(\eta)\ud\eta
\]
for suitable $f$. A special case of our main result is then as follows.

\begin{theorem}\label{thm:introduction}
Let $r>n-1$ and $a\in C^{r}_{*}S^{0}_{1,1/2}$. Then 
\[
a(x,D):\Hp\to \Hp
\]
is bounded for all $p\in(1,\infty)$. 
\end{theorem}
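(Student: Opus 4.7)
The plan is to follow the standard paradifferential recipe: decompose the rough symbol $a$ into a smooth principal part and a lower-order rough remainder via Bony's symbol smoothing \cite{Bony81}, then treat the two pieces separately using the mapping theory of smooth pseudodifferential operators on $\Hp$ and the Sobolev embeddings \eqref{eq:Sobolev1}. Concretely, for each fixed $p \in (1,\infty)$ I would choose a smoothing parameter $\gamma = \gamma(p) \in (1/2, 1)$ and write $a(x,\eta) = a^{\#}(x,\eta) + a^{b}(x,\eta)$, where $a^{\#}$ is obtained by mollifying $a$ in the $x$-variable at dyadic spatial scale $2^{-\gamma k}$ on the frequency shell $|\eta| \sim 2^{k}$. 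Using the growth $\|a(\cdot,\eta)\|_{C^{r}_{*}} \lesssim \lb \eta \rb^{r/2}$ intrinsic to the class $C^{r}_{*}S^{0}_{1,1/2}$, a standard computation places $a^{\#}$ in $S^{0}_{1,\gamma}$ and $a^{b}$ in $C^{r}_{*} S^{-r(\gamma - 1/2)}_{1,\gamma}$.

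For the smooth part, I would invoke the boundedness of smooth $S^{0}_{1,\gamma}$ pseudodifferential operators on $\Hp$, developed in \cite{HaPoRo20} and sharpened in the main body of the present paper, which gives $a^{\#}(x,D) : \Hp \to \Hp$ directly. For the rough remainder I would use the chain
\[
\Hp \subseteq W^{-s(p),p}(\Rn) \xrightarrow{\ a^{b}(x,D)\ } W^{s(p),p}(\Rn) \subseteq \Hp,
\]
where the outer embeddings are \eqref{eq:Sobolev1} and the middle arrow is the classical Sobolev-space boundedness of rough pseudodifferential operators (Bourdaud \cite{Bourdaud82}, Marschall \cite{Marschall88}, Taylor \cite{Taylor91}). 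The middle step requires the order of $a^{b}$ to dominate $2 s(p)$ in absolute value, i.e.\ $r(\gamma - 1/2) > 2 s(p) = (n-1) |1/p - 1/2|$; since $4 s(p) < n-1$ for every $p \in (1,\infty)$, the hypothesis $r > n-1$ is exactly what is needed to pick a valid $\gamma \in (1/2,1)$.

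The hard part is the smooth piece: for $\gamma$ close to $1$, the class $S^{0}_{1,\gamma}$ lies outside the scope of any Calder\'on--Vaillancourt type $L^{p}$ boundedness, so the whole scheme rests on having an $\Hp$ mapping theorem robust across the entire range $\gamma \in [0,1)$. This is precisely the structural advantage of Hardy spaces for Fourier integral operators over $L^{p}$, and this input, which must be supplied by the pseudodifferential calculus developed in the main body of the paper, is what makes the paradifferential argument go through cleanly. A secondary technicality is the bookkeeping of admissible Sobolev exponents in the classical rough-symbol bound of the middle arrow, but since only the single pair $\pm s(p)$ with $|s(p)| < (n-1)/4 < r$ enters, this falls comfortably within the $C^{r}$-range.
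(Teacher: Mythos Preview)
Your overall scheme---smooth at level $\gamma\in(1/2,1)$, handle $a^{\flat}_{\gamma}$ via the Sobolev embeddings \eqref{eq:Sobolev1} and classical rough-symbol bounds, and treat $a^{\sharp}_{\gamma}$ as ``smooth''---is exactly the decomposition the paper uses (see \eqref{eq:symboldecomp}). The rough remainder step is fine, and the arithmetic matching $r(\gamma-\tfrac12)>2s(p)$ to $r>n-1$ is correct.

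The gap is in the smooth piece. You assume that operators with symbols in $S^{0}_{1,\gamma}$ for $\gamma>1/2$ act boundedly on $\Hp$, and that this is supplied either by \cite{HaPoRo20} or by the present paper. Neither is the case. Lemma~\ref{lem:smoothpseudo} (which is the relevant result from \cite{HaPoRo20}) covers only $\delta\le 1/2$; the parabolic scaling built into $\ph_{\w}$ is tied precisely to $\delta=1/2$, and no $\Hp$ boundedness theorem for smooth $S^{0}_{1,\gamma}$ symbols with $\gamma>1/2$ is stated, proved, or known. The paper does not fill this gap by proving such a general theorem. Instead, it further splits $a^{\sharp}_{\gamma}=a^{\sharp}_{1/2}+(a^{\flat}_{1/2})^{\sharp}_{\gamma}$, disposes of $a^{\sharp}_{1/2}\in S^{0}_{1,1/2}$ via Lemma~\ref{lem:smoothpseudo}, and then carries out a direct and rather involved argument for the intermediate-frequency piece $(a^{\flat}_{1/2})^{\sharp}_{\gamma}$: a Coifman--Meyer elementary-symbol expansion, the second dyadic decomposition of $f$ in the angular variable, the anisotropic multiplier bound of Lemma~\ref{lem:multiplier}, and the $L^{p}(S^{n-1};\ell^{2})$ boundedness of the Hardy--Littlewood maximal function. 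Crucially, this argument uses not the smoothness of $(a^{\flat}_{1/2})^{\sharp}_{\gamma}$ but the $C^{r}_{*}S^{0}_{1,1/2}$ seminorms inherited from $a$, through the dyadic estimate $\|\psi_{j}(D)a_{k}\|_{L^{\infty}}\lesssim 2^{(k/2-j)r}$; this is where the condition $r>n-1$ actually enters. So the ``black box'' you invoke for the smooth part is precisely the content of the theorem you are trying to prove.
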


Theorem \ref{thm:introduction} is a consequence of Theorem \ref{thm:critical}, which also deals with the case where $r\leq n-1$, and which more generally considers mapping properties between Sobolev spaces $\Hps$ and $\Hpt$ over $\Hp$, for suitable $s$ and $t$. From this theorem one can in turn derive mapping properties on $\Hp$ for $C^{r}_{*}S^{0}_{1,\delta}$ symbols for general $\delta\in[0,1/2]$, as is shown in Corollary \ref{cor:combined}.

It should be noted that we consider $C^{r}_{*}S^{m}_{1,\delta}$ symbols with $\delta>0$ not merely to formulate more general statements. In fact, when applying the paradifferential smoothing procedure to a differential operator $A$, the resulting smooth pseudodifferential operator $A_{1}$ and rough pseudodifferential operator $A_{2}$ have $S^{m}_{1,\delta}$ and $C^{r}_{*}S^{m}_{1,\delta}$ symbols, respectively, and only $\delta>0$ is of interest here. Moreover, $\delta=1/2$ appears to be a critical value for wave equations, as it is for the more general theory of Fourier integral operators. We stress that Theorem \ref{thm:introduction} and the more general Theorem \ref{thm:critical} are crucial for the results on the $L^{p}$ regularity of wave equations with rough coefficients in~\cite{Hassell-Rozendaal23}, since they provide one of the two pieces that are required for the paradifferential approach to such equations.

However, the results in this article are of independent interest. For example, for $\delta=0$ they apply to what are seemingly the simplest pseudodifferential operators: multiplication operators. That $L^{p}(\Rn)$ is invariant under multiplication by bounded measurable functions is trivial, but it is nontrivial to determine the mapping properties of multiplication operators on the Hardy spaces for Fourier integral operators. This difficulty arises from the definition of the $\Hp$ norm as
\begin{equation}\label{eq:HpFIOnorm1}
\|f\|_{\Hp}=\|q(D)f\|_{L^{p}(\Rn)}+\Big(\int_{S^{n-1}}\|\ph_{\w}(D)f\|_{L^{p}(\Rn)}^{p}\ud\w\Big)^{1/p}
\end{equation}
for $1<p<\infty$, with a suitable modification for $p=1$ (see Definition \ref{def:HpFIO}). Here $q\in C^{\infty}_{c}(\Rn)$ is a low-frequency cutoff, and the Fourier multipliers $\ph_{\w}(D)$ localize in frequency to a paraboloid in the direction of $\w\in S^{n-1}$. Since Fourier multipliers and multiplication operators do not commute, it is not clear from \eqref{eq:HpFIOnorm1} how multiplication operators act on $\Hp$, and the situation becomes even more complicated when multiplying with a function of limited regularity. Nonetheless, Corollaries \ref{cor:subcritical} and \ref{cor:combined} show that for each $r>0$, multiplication with an element of $C^{r}_{*}(\Rn)$ is a bounded operator on $\Hps$ for all $p$ in an interval around $2$ and for suitable $s$, both depending on $r$ and the dimension $n$.

\subsection{Overview of the proof}
To prove our results we use various techniques. Firstly, mapping properties of rough pseudodifferential operators between Sobolev spaces over $L^{p}(\Rn)$ are classical~\cite{Bourdaud82,Marschall88, Taylor91,Taylor00}, and by combining these with the Sobolev embeddings in \eqref{eq:Sobolev1}, one obtains some first results. Moreover, it was shown in~ \cite{HaPoRo20} that pseudodifferential operators with $S^{0}_{1,1/2}$ symbols are bounded on $\Hp$ for all $p\in[1,\infty]$. For $\delta<1/2$ one can then apply the paradifferential smoothing procedure, this time to the rough symbol $a$ itself. As a result one obtains that, if $a\in C^{r}_{*}S^{0}_{1,\delta}$ for some $r>0$ and $\delta<1/2$, then $a(x,D)$ acts boundedly on $\Hps$ for all $p$ in an interval around $2$ and for suitable $s$, both depending on $r$, $n$ and $\delta$. 

However, for the critical case where $\delta=1/2$, such techniques do not suffice. Instead, to prove Theorem \ref{thm:introduction}, and the more general Theorem \ref{thm:critical}, we first note that \eqref{eq:HpFIOnorm1} allows us to apply Littlewood--Paley theory to the Hardy spaces for Fourier integral operators, by applying it to $\ph_{\w}(D)f$ for each $\w\in S^{n-1}$. In fact, it should be noted that \eqref{eq:HpFIOnorm1} is not how the $\Hp$ norm was originally defined in \cite{Smith98a,HaPoRo20}. It was shown by the author in \cite{Rozendaal21} that we may equivalently define the $\Hp$ norm in this manner for $1<p<\infty$, and the corresponding result for $p=1$ was obtained by Fan, Liu, Song and the author in~\cite{FaLiRoSo23}. 

With these equivalent characterizations and Littlewood--Paley theory in hand, we first remove the lowest and the highest frequencies of $a$. This reduces to the case where $\F a(\cdot,\eta)(\xi)=0$ unless $|\eta|^{1/2}\lesssim |\xi|\lesssim (1+|\eta|)^{\gamma}$, for some $\gamma\in[1/2,1)$. The highest frequencies are dealt with using the Sobolev embeddings in \eqref{eq:Sobolev1} and results about rough pseudodifferential operators on $W^{s,p}(\Rn)$ from~\cite{Marschall88}. On the other hand, the term with the lowest frequencies is an $S^{0}_{1,1/2}$ symbol, and the corresponding pseudodifferential operator is bounded on $\Hps$ for all $p\in[1,\infty]$ and $s\in\R$. The remaining frequencies, and specifically those $\xi$ for which $|\xi|\eqsim|\eta|^{1/2}$, are the most problematic, and dealing with these is the heart of the proof.

Next, we use a symbol decomposition from \cite{Coifman-Meyer78} to reduce to the case where $a(x,\eta)=\sum_{k=0}a_{k}(x)\psi_{k}(\eta)$ for suitable multiplication operators $a_{k}$ and dyadic localizations $\psi_{k}$. We then proceed to group together the frequencies of each $a_{k}$, as well as those of the function $f$ to which $a(x,D)$ is applied. Such a paradifferential approach has been used in e.g.~\cite{Bourdaud82,Marschall88,Taylor91,Taylor00} to study rough pseudodifferential operators on $L^{p}(\Rn)$. However, in those cases the frequencies are grouped together in dyadic annuli, whereas in our setting it is necessary to apply a further decomposition to the frequencies of $f$, the so-called dyadic-parabolic or second dyadic decomposition. The proof of \cite[Proposition 7.4]{Geba-Tataru07} is somewhat similar to this part of our argument, although there the authors obtain $L^{2}$ bounds.

We then use an anisotropic version of the Mikhlin multiplier theorem, Lemma \ref{lem:multiplier}, to estimate away the Fourier multipliers $\ph_{\w}(D)$, uniformly in $\w\in S^{n-1}$, after which we can use supremum norm estimates on $L^{p}(\Rn)$ to deal with the multiplication operators $a_{k}$. In the process we pick up terms that blow up if $r\leq n-1$, and this is where the restriction $r>n-1$ in Theorem \ref{thm:introduction} comes in. To conclude the proof we use the boundedness of the Hardy--Littlewood maximal function on $L^{p}(S^{n-1};\ell^{2})$. We note that the use of the anisotropic multiplier theorem and the boundedness of the Hardy--Littlewood maximal function are what restricts our proof to $p\in(1,\infty)$ (see Remark \ref{rem:p1}). We will not deal with the cases $p=1$ and $p=\infty$ in Theorem \ref{thm:critical} in the present article, although the results in Section \ref{sec:subcritical} hold for all $p\in[1,\infty]$.

\subsection{Organization of this article}
This article is organized as follows. In Section \ref{sec:preliminaries} we collect some background for the rest of the article. In Section \ref{subsec:Hardy} we first introduce the Hardy spaces for Fourier integral operators. Then we collect some results from Littlewood--Paley theory that will be used for the proof of Theorem \ref{thm:critical}, such as the anisotropic multiplier theorem in Lemma \ref{lem:multiplier}. In Section \ref{subsec:symbols} we introduce the rough symbol classes, as well as the paradifferential smoothing procedure. In Section \ref{sec:subcritical} we then derive some first results about the boundedness of rough pseudodifferential operators on Hardy spaces for Fourier integral operators. These results will suffice for many purposes, but not for $C^{r}_{*}S^{m}_{1,1/2}$ symbols. That critical case is dealt with in Section \ref{sec:critical}, where in particular Theorem \ref{thm:introduction} is proved.

\subsection{Notation}\label{subsec:notation}

The natural numbers are $\N=\{1,2,\ldots\}$, and the nonnegative integers are $\Z_{+}=\N\cup\{0\}$. Throughout this article, we fix $n\in\N$ with $n\geq2$. Our techniques can also be applied for $n=1$, but in that case the results are classical, by Proposition \ref{lem:pseudoLp} and because $\HT^{s,p}_{FIO}(\R)=\HT^{s,p}(\R)$ for all $p\in[1,\infty]$ and $s\in\R$, cf.~\eqref{eq:classical} and \eqref{eq:sobolev}. 

For $\xi\in\Rn$ we write $\lb\xi\rb=(1+|\xi|^{2})^{1/2}$, and $\hat{\xi}=\xi/|\xi|$ if $\xi\neq 0$. We use multi-index notation, where $\partial^{\alpha}_{\xi}=\partial^{\alpha_{1}}_{\xi_{1}}\ldots\partial^{\alpha_{n}}_{\xi_{n}}$ and $\xi^{\alpha}=\xi_{1}^{\alpha_{1}}\ldots\xi_{n}^{\alpha_{n}}$ for $\xi=(\xi_{1},\ldots,\xi_{n})\in\Rn$ and $\alpha=(\alpha_{1},\ldots,\alpha_{n})\in\Z_{+}^{n}$.

The spaces of Schwartz functions and tempered distributions are $\Sw(\Rn)$ and $\Sw'(\Rn)$, respectively.  The Fourier transform of $f\in\Sw'(\Rn)$ is denoted by $\F f$ or $\widehat{f}$. If $f\in\Ell^{1}(\Rn)$ then
\begin{align*}
\F f(\xi)=\int_{\Rn}e^{-i x\xi}f(x)\ud x\quad (\xi\in\Rn).
\end{align*}
The Fourier multiplier with symbol $\ph\in\Sw'(\Rn)$ is denoted by $\ph(D)$. 

The H\"{o}lder conjugate of $p\in[1,\infty]$ is denoted by $p'$. The volume of a measurable subset $B$ of a measure space $(\Omega,\mu)$ is $|B|$. For an integrable $F:B\to\C$, we write
\[
\fint_{B}F(x)\ud\mu(x)=\frac{1}{|B|}\int_{B}F(x)\ud\mu(x)
\]
if $|B|<\infty$. The space of bounded linear operators between Banach spaces $X$ and $Y$ is $\La(X,Y)$, and $\La(X):=\La(X,X)$. We will often simply write $\|\cdot\|_{F}$ for the norm of a function space $F(\Rn)$ over $\Rn$.

We write $f(s)\lesssim g(s)$ to indicate that $f(s)\leq Cg(s)$ for all $s$ and a constant $C>0$ independent of $s$, and similarly for $f(s)\gtrsim g(s)$ and $g(s)\eqsim f(s)$.

\section{Preliminaries}\label{sec:preliminaries}

In this section we collect the required background on the Hardy spaces for Fourier integral operators, rough symbol classes, and on the paradifferential smoothing procedure.

\subsection{Hardy spaces for Fourier integral operators}\label{subsec:Hardy}

In this subsection we introduce the Hardy spaces for Fourier integral operators, and we collect some of their basic properties.

Throughout, to simplify various statements, for $s\in\R$ and $p\in[1,\infty]$ we write 
\begin{equation}\label{eq:classical}
\HT^{s,p}(\Rn):=
\begin{cases}
W^{s,p}(\Rn)&\text{if }p\in(1,\infty),\\
\lb D\rb^{-s}\HT^{1}(\Rn)&\text{if }p=1,\\
\lb D\rb^{-s}\bmo(\Rn)&\text{if }p=\infty.
\end{cases}
\end{equation}
Here $\HT^{1}(\Rn)$ is the local Hardy space from \cite{Goldberg79}. Recall that $\HT^{1}(\Rn)$ consists of all $f\in\Sw'(\Rn)$ such that $q(D)f\in L^{1}(\Rn)$ and $(1-q)(D)f\in H^{1}(\Rn)$, for $H^{1}(\Rn)$ the classical Hardy space, endowed with the norm
\begin{equation}\label{eq:H1norm}
\|f\|_{\HT^{1}(\Rn)}:=\|q(D)f\|_{L^{1}(\Rn)}+\|(1-q)(D)f\|_{H^{1}(\Rn)}.
\end{equation}
Throughout, $q\in C^{\infty}_{c}(\Rn)$ is such that $q(\zeta)=1$ for $|\zeta|\leq 2$. Moreover, $\bmo(\Rn)$ consists of all $f\in\Sw'(\Rn)$ such that $q(D)f\in L^{\infty}(\Rn)$ and $(1-q)(D)f\in \BMO(\Rn)$, with norm
\begin{equation}\label{eq:bmonorm}
\|f\|_{\bmo(\Rn)}:=\|q(D)f\|_{L^{\infty}(\Rn)}+\|(1-q)(D)f\|_{\BMO(\Rn)}.
\end{equation}
Also recall that $\bmo(\Rn)$ is the dual of $\HT^{1}(\Rn)$.

Fix a non-negative radial $\ph\in C^{\infty}_{c}(\Rn)$ such that $\ph(\zeta)=0$ for $|\zeta|>1$, and $\ph\equiv1$ in a neighborhood of zero. For $\w\in S^{n-1}$, $\sigma>0$ and $\zeta\in\Rn\setminus\{0\}$, set $\ph_{\w,\sigma}(\zeta):=c_{\sigma}\ph\big(\tfrac{\hat{\zeta}-\w}{\sqrt{\sigma}}\big)$, where $c_{\sigma}:=\big(\int_{S^{n-1}}\ph\big(\tfrac{e_{1}-\nu}{\sqrt{\sigma}}\big)^{2}\ud\nu\big)^{-1/2}$ for $e_{1}=(1,0,\ldots,0)$ the first basis vector of $\Rn$ (this choice is irrelevant). Also let $\ph_{\w,\sigma}(0):=0$. Next, fix a non-negative radial $\Psi\in C^{\infty}_{c}(\Rn)$ such that $\Psi(\zeta)=0$ if $|\zeta|\notin[1/2,2]$, and
\[
\int_{0}^{\infty}\Psi(\sigma\zeta)^{2}\frac{\ud \sigma}{\sigma}=1\quad(\zeta\neq0).
\]
For $\w\in S^{n-1}$ and $\zeta\in\Rn$, set 
\[
\ph_{\w}(\zeta):=\int_{0}^{4}\Psi(\tau\zeta)\ph_{\w,\tau}(\zeta)\frac{\ud\tau}{\tau}.
\]
We will use the following properties of these functions, cf.~\cite[Remark 3.3]{Rozendaal21} and~\cite[Lemma 2.2]{FaLiRoSo23}:
\begin{enumerate}
\item For all $\w\in S^{n-1}$ and $\zeta\neq0$ one has 
\begin{equation}\label{eq:phiwsupport}
\ph_{\w}(\zeta)=0\text{ if }|\zeta|<\tfrac{1}{8}\text{ or }|\hat{\zeta}-\w|>2|\zeta|^{-1/2};
\end{equation}
\item For all $\alpha\in\Z_{+}^{n}$ and $\beta\in\Z_{+}$ there exists a $C_{\alpha,\beta}\geq0$ such that
\begin{equation}\label{eq:phiwgrowth}
|(\w\cdot \nabla_{\zeta})^{\beta}\partial^{\alpha}_{\zeta}\ph_{\w}(\zeta)|\leq C_{\alpha,\beta}|\zeta|^{\frac{n-1}{4}-\frac{|\alpha|}{2}-\beta}
\end{equation}
for all $\w\in S^{n-1}$ and $\zeta\neq0$;
\item\label{it:phiproperties3} For all $\alpha\in\Z_{+}^{n}$ there exists a $C_{\alpha}\geq0$ such that
\[
\Big|\partial_{\zeta}^{\alpha}\Big(\int_{S^{n-1}}\ph_{\w}(\zeta)\ud\w\Big)^{-1}\Big|\leq C_{\alpha} |\zeta|^{\frac{n-1}{4}-|\alpha|}
\]
for all $\zeta\in\Rn$ with $|\zeta|\geq1/2$. Hence there exists a radial $m\in S^{(n-1)/4}(\Rn)$ such that if $f\in\Sw'(\Rn)$ satisfies $\supp(\wh{f}\,)\subseteq \{\zeta\in\Rn\mid |\zeta|\geq1/2\}$, then 
\begin{equation}\label{eq:phiwintegral}
f=\int_{S^{n-1}}m(D)\ph_{\nu}(D)f\ud \nu.
\end{equation}
\end{enumerate}
In \eqref{it:phiproperties3} and throughout, for $\gamma\in\R$ we denote by $S^{\gamma}(\Rn)$ the class of $m\in C^{\infty}(\Rn)$ such that for each $\alpha\in\Z_{n}^{+}$ there exists a $C_{\alpha}'\geq0$ such that
\[
|\partial_{\zeta}^{\alpha}m(\zeta)|\leq C_{\alpha}'|\zeta|^{\gamma-|\alpha|}
\]
for all $\zeta\in\Rn$. Each $m\in S^{\gamma}(\Rn)$ yields a bounded operator $m(D):\HT^{s+\gamma,p}(\Rn)\to \HT^{s,p}(\Rn)$ for all $s\in\R$ (see Lemma \ref{lem:pseudoLp}).

Recall that $q\in C^{\infty}_{c}(\Rn)$ is such that $q(\zeta)=1$ for $|\zeta|\leq 2$. We can now define the Hardy spaces for Fourier integral operators.

\begin{definition}\label{def:HpFIO}
For $p\in[1,\infty)$ we let $\Hp$ consists of all $f\in\Sw'(\Rn)$ such that $q(D)f\in L^{p}(\Rn)$, $\ph_{\w}(D)f\in \HT^{p}(\Rn)$ for almost all $\w\in S^{n-1}$, and 
\[
\Big(\int_{S^{n-1}}\|\ph_{\w}(D)f\|_{\HT^{p}(\Rn)}^{p}\ud\w\Big)^{1/p}<\infty,
\]
endowed with the norm
\[
\|f\|_{\Hp}:=\|q(D)f\|_{L^{p}(\Rn)}+\Big(\int_{S^{n-1}}\|\ph_{\w}(D)f\|_{\HT^{p}(\Rn)}^{p}\ud\w\Big)^{1/p}.
\]
We also set $\HT^{\infty}_{FIO}(\Rn):=(\HT^{1}_{FIO}(\Rn))^{*}$. Moreover, for $p\in[1,\infty]$ and $s\in\R$ we let $\Hps$ consist of all $f\in\Sw'(\Rn)$ such that $\lb D\rb^{s}f\in\Hp$, endowed with the norm
\[
\|f\|_{\Hps}:=\|\lb D\rb^{s}f\|_{\Hp}.
\] 
\end{definition}

It is straightforward to see that, for $1\leq p<\infty$ and $s\in\R$, one has
\begin{equation}\label{eq:HpFIOnorm}
\|f\|_{\Hps}\eqsim \|q(D)f\|_{L^{p}(\Rn)}+\Big(\int_{S^{n-1}}\|\ph_{\w}(D)f\|_{\HT^{s,p}(\Rn)}^{p}\ud\w\Big)^{1/p}
\end{equation}
for all $f\in\Hps$, with implicit constants independent of $f$.

\begin{remark}\label{rem:defHpFIO}
We note that this is not how the Hardy spaces for Fourier integral operators were originally defined in~\cite{Smith98a} and~\cite{HaPoRo20}. There these spaces were introduced using a conical square function over the cosphere bundle, or equivalently using wave packet transforms and tent spaces over the cosphere bundle. That definition also includes an intrinsic description of $\HT^{\infty}_{FIO}(\Rn)$. It was shown in \cite{Rozendaal21} for $1<p<\infty$, and in \cite{FaLiRoSo23} for $p=1$, that the spaces in Definition \ref{def:HpFIO} coincide with the original Hardy spaces for Fourier integral operators. The same then holds for $p=\infty$, by duality (see \cite[Proposition 6.8]{HaPoRo20}).
\end{remark}

For $p\in[1,\infty]$ set
\begin{equation}\label{eq:sp}
s(p):=\tfrac{n-1}{2}|\tfrac{1}{p}-\tfrac{1}{2}|.
\end{equation}
With notation as in \eqref{eq:classical}, by~\cite[Theorem 7.4]{HaPoRo20} the following Sobolev embeddings hold for all $p\in[1,\infty]$ and $s\in\R$:
\begin{equation}\label{eq:sobolev}
\HT^{s+s(p),p}(\Rn)\subseteq\Hps\subseteq\HT^{s-s(p),p}(\Rn),
\end{equation}
and the exponents in these embeddings are sharp (see~\cite[Remark 7.9]{HaPoRo20} and~\cite[Remark 6.5]{FaLiRoSo23}).

A key idea in this article is to apply Littlewood-Paley theory to \eqref{eq:HpFIOnorm}. Throughout, $(\psi_{j})_{j=0}^{\infty}\subseteq C^{\infty}_{c}(\Rn)$ is a fixed Littlewood--Paley decomposition. That is,
\begin{equation}\label{eq:LitPaldecomp}
\sum_{j=0}^{\infty}\psi_{j}(\xi)=1\quad(\xi\in\Rn),
\end{equation}
$\psi_{0}(\xi)=0$ for $|\xi|>1$, $\psi_{1}(\xi)=0$ if $|\xi|\notin [1/2,2]$, and $\psi_{j}(\xi)=\psi_{1}(2^{-j+1}\xi)$ for all $j>1$ and $\xi\in\Rn$. In fact, one can assume that $\psi_{1}(\xi)=0$ for $|\xi|\notin [(1+\veps)/2,2-\veps]$ for some $\veps>0$, an additional assumption which is of minor convenience in the proof of Theorem \ref{thm:critical} below. For notational simplicity, we also set $\psi_{j}:=0$ for $j<0$. We will use the following standard lemma (see e.g.~\cite[Section 2.5]{Triebel10}).

\begin{lemma}\label{lem:LitPal}
Let $p\in[1,\infty)$ and $s\in\R$. Let $(\chi_{k})_{k=0}^{\infty}\subseteq C^{\infty}_{c}(\Rn)$ be such that $\chi_{k}(\xi)=\chi_{1}(2^{-k+1}\xi)$ for all $k>1$ and $\xi\in\Rn$, and $\chi_{1}(\xi)=0$ if $|\xi|\notin [1/4,4]$. Then there exists a $C>0$ such that the following holds for all $(f_{k})_{k=0}^{\infty}\subseteq L^{p}(\Rn)$ with
\[
\supp(\wh{f}_{k})\subseteq\{\xi\in\Rn\mid |\xi|\in[2^{k-3},2^{k+1}]\}
\]
for $k\geq1$, and $\supp(\wh{f}_{0})\subseteq\{\xi\in\Rn\mid |\xi|\leq 2\}$.
\begin{enumerate}
\item\label{it:LitPal1} 
If $(\sum_{k=0}^{\infty}4^{ks}|f_{k}|^{2})^{1/2}\in L^{p}(\Rn)$ then $\sum_{k=0}^{\infty}f_{k}\in \HT^{s,p}(\Rn)$ and
\[
\Big\|\sum_{k=0}^{\infty}f_{k}\Big\|_{\HT^{s,p}(\Rn)}\leq C\Big\|\Big(\sum_{k=0}^{\infty}4^{ks}|f_{k}|^{2}\Big)^{1/2}\Big\|_{L^{p}(\Rn)}.
\]
\item\label{it:LitPal2}
If $f_{k}=\chi_{k}(D)f$ for all $k\geq0$ and some $f\in \Sw'(\Rn)$, and if $\sum_{k=0}^{\infty}f_{k}\in \HT^{s,p}(\Rn)$, then $(\sum_{k=0}^{\infty}4^{ks}|f_{k}|^{2})^{1/2}\in L^{p}(\Rn)$ and
\[
\Big\|\Big(\sum_{k=0}^{\infty}4^{ks}|f_{k}|^{2}\Big)^{1/2}\Big\|_{L^{p}(\Rn)}\leq C\Big\|\sum_{k=0}^{\infty}f_{k}\Big\|_{\HT^{s,p}(\Rn)}.
\]
\end{enumerate}
\end{lemma}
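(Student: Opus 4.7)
The lemma is the standard Littlewood--Paley-type characterization of $\HT^{s,p}(\Rn)$, and my plan follows the argument in \cite[Section 2.5]{Triebel10} specialized to the dyadic decomposition used here. The basic input is the classical equivalence
\[
\|g\|_{\HT^{s,p}(\Rn)}\eqsim\Big\|\Big(\sum_{j=0}^{\infty}4^{js}|\psi_{j}(D)g|^{2}\Big)^{1/2}\Big\|_{L^{p}(\Rn)},
\]
valid for all $p\in[1,\infty)$ and $s\in\R$ under the convention \eqref{eq:classical}. Both \eqref{it:LitPal1} and \eqref{it:LitPal2} reduce to replacing the canonical LP projections $\psi_{j}(D)g$ by a sequence $(f_{k})$ satisfying the given frequency-support condition.

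For \eqref{it:LitPal1}, set $g:=\sum_{k=0}^{\infty}f_{k}$. The support assumptions on $(f_{k})$ and on $(\psi_{j})$ force $\psi_{j}(D)f_{k}=0$ whenever $|j-k|>N$ for some fixed integer $N$, so $\psi_{j}(D)g$ is a finite sum of at most $2N+1$ terms $\psi_{j}(D)f_{k}$. Each such term is pointwise dominated by $Mf_{k}$, with $M$ the Hardy--Littlewood maximal operator, and after an index shift one obtains
\[
\Big(\sum_{j}4^{js}|\psi_{j}(D)g|^{2}\Big)^{1/2}\lesssim\Big(\sum_{k}4^{ks}(Mf_{k})^{2}\Big)^{1/2}.
\]
For $p\in(1,\infty)$ the Fefferman--Stein vector-valued maximal inequality on $L^{p}(\Rn;\ell^{2})$ closes the argument, while for $p=1$ it is replaced by a vector-valued Calder\'{o}n--Zygmund estimate adapted to the local Hardy space $\HT^{1}(\Rn)$.

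For \eqref{it:LitPal2}, I would argue in the reverse direction using Peetre's maximal function inequality for entire functions of exponential type: since $f_{k}=\chi_{k}(D)f$ has Fourier support in a dyadic shell of scale $2^{k}$ and $\chi_{k}$ satisfies scale-invariant Mikhlin bounds at that scale, $|f_{k}(x)|$ is pointwise controlled by a maximal average of the nearby $\psi_{j}(D)g$ with constants uniform in $k$. Another application of the vector-valued maximal inequality (or its $\HT^{1}$-substitute at $p=1$), combined with the disjointness $\psi_{j}(D)\chi_{k}(D)=0$ for $|j-k|>N'$, then yields the claimed square function bound.

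\textbf{Main obstacle.} The only subtle point is the endpoint $p=1$, where the Fefferman--Stein inequality fails on $L^{1}(\Rn;\ell^{2})$ and must be replaced by a vector-valued Calder\'{o}n--Zygmund argument on $\HT^{1}(\Rn)$, or by an atomic decomposition; in the $H^{s,1}$ setting this is where the definition of $\HT^{s,1}(\Rn)$ via the local Hardy space in \eqref{eq:classical} enters essentially. For $p\in(1,\infty)$ the whole proof is a direct combination of scalar and vector-valued Mikhlin and maximal function estimates with the disjointness of Fourier supports.
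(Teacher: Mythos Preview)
Your sketch is correct and follows precisely the approach the paper indicates: the paper does not actually prove this lemma but simply records it as standard and refers to \cite[Section 2.5]{Triebel10}, which is exactly the source whose argument you outline. Your identification of the $p=1$ endpoint as the only nontrivial point, handled via the Triebel--Lizorkin characterization of $\HT^{s,1}(\Rn)=\lb D\rb^{-s}\HT^{1}(\Rn)$ rather than the Fefferman--Stein maximal inequality, is accurate and consistent with the reference.
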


We will combine Lemma \ref{lem:LitPal} with the following result about multipliers on the space $L^{p}(\Rn;\ell^{2})$ of $p$-integrable $\ell^{2}$-valued functions. Recall that $e_{1}$ is the first basis vector of $\Rn$, although this choice is again immaterial. 

\begin{lemma}\label{lem:multiplier}
Let $p\in(1,\infty)$. Then $\lb D\rb^{-\frac{n-1}{4}}\ph_{\w}(D)\in\La(L^{p}(\Rn;\ell^{2}))$ for all $\w\in S^{n-1}$, and
\[
\|\lb D\rb^{-\frac{n-1}{4}}\ph_{\w}(D)\|_{\La(L^{p}(\Rn;\ell^{2}))}=\|\lb D\rb^{-\frac{n-1}{4}}\ph_{e_{1}}(D)\|_{\La(L^{p}(\Rn;\ell^{2}))}.
\]
\end{lemma}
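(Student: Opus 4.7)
The plan is to treat the norm equality and the boundedness separately. The norm equality is obtained from a rotation-invariance argument, while the boundedness reduces to an anisotropic (parabolic) Mikhlin-type multiplier theorem applied in the Hilbert-valued setting.

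For the norm equality, fix $R_\w \in \mathrm{SO}(n)$ with $R_\w e_1 = \w$, and let $U_{R_\w}f := f\circ R_\w^{-1}$ denote the induced isometry on $L^p(\Rn;\ell^2)$ (acting componentwise on the $\ell^2$-fibre). Because $\ph$ and $\Psi$ are radial, a direct computation from the formulas defining $\ph_{\w,\sigma}$ and $\ph_\w$ gives $\ph_\w(\zeta) = \ph_{e_1}(R_\w^{-1}\zeta)$ for every $\zeta \in \Rn$. Moreover $\langle D\rangle^{-(n-1)/4}$ is a radial Fourier multiplier and therefore commutes with $U_{R_\w}$. Conjugation then yields
\begin{align*}
\langle D\rangle^{-(n-1)/4}\ph_\w(D) \;=\; U_{R_\w}\,\langle D\rangle^{-(n-1)/4}\ph_{e_1}(D)\,U_{R_\w}^{-1},
\end{align*}
which immediately gives the equality of operator norms.

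It remains to prove boundedness for $\w = e_1$. Set $m(\zeta) := \langle\zeta\rangle^{-(n-1)/4}\ph_{e_1}(\zeta)$ and split $\zeta = (\zeta_1, \zeta')$ with $\zeta' \in \R^{n-1}$. By \eqref{eq:phiwsupport} the support of $m$ is contained in the parabolic region $\zeta_1 \gtrsim 1$, $|\zeta'| \lesssim \zeta_1^{1/2}$, and \eqref{eq:phiwgrowth} (with $\w = e_1$) combined with the Leibniz rule gives
\begin{align*}
|\partial_{\zeta_1}^\beta\,\partial_{\zeta'}^{\alpha'} m(\zeta)| \;\lesssim\; \langle\zeta\rangle^{-\beta - |\alpha'|/2}
\end{align*}
for all $\beta \in \Z_+$ and $\alpha' \in \Z_+^{n-1}$. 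Decompose $m = \sum_{j \geq 0} m_j$ by a Littlewood--Paley partition in $\zeta_1$ alone, so that $m_j$ is supported in $\zeta_1 \in [2^{j-1}, 2^{j+1}]$ (and automatically in $|\zeta'| \lesssim 2^{j/2}$). Under the anisotropic rescaling $\tilde m_j(\eta) := m_j(2^j\eta_1, 2^{j/2}\eta')$, the rescaled symbol $\tilde m_j$ is supported in a fixed compact set bounded away from the origin and satisfies $|\partial^\gamma \tilde m_j(\eta)| \lesssim 1$ uniformly in $j$, hence trivially satisfies the classical Mikhlin condition. The Hilbert-valued Mikhlin multiplier theorem, applied with target $\ell^2$, then yields $\|\tilde m_j(D)\|_{\La(L^p(\Rn;\ell^2))} \lesssim 1$, and an anisotropic change of variable transfers this bound uniformly to $m_j(D)$ (the Jacobian factors cancel in the operator norm on $L^p$).

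To conclude, the $\zeta_1$-supports of the $m_j$ form a one-dimensional dyadic Littlewood--Paley structure, so a vector-valued Littlewood--Paley theorem applied only in $\zeta_1$, with values in $L^p(\R^{n-1};\ell^2)$, combines the individual bounds into $\|m(D)\|_{\La(L^p(\Rn;\ell^2))} \lesssim 1$. The main technical point is the anisotropic geometry: the isotropic Mikhlin theorem does not apply directly to $m$, since only anisotropic derivative bounds are available from \eqref{eq:phiwgrowth}. The role of the parabolic-dyadic decomposition is precisely to rescale each parabolic cap to a fixed box where standard Mikhlin applies, while preserving a genuinely dyadic structure in $\zeta_1$ that one can sum by a one-dimensional vector-valued Littlewood--Paley argument. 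The restriction $p \in (1,\infty)$ enters through this last step and through the Hilbert-valued Mikhlin theorem.
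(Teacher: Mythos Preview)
Your rotation argument for the norm equality is correct and matches the paper. For the boundedness, your parabolic decomposition and rescaling is a valid strategy, and it is essentially the anisotropic Calder\'on--Zygmund route that the paper mentions as an alternative in Remark~\ref{rem:CalZyg}. However, the summation step is sketched rather lightly: combining the uniform $L^p(\Rn;\ell^2)$ bounds on the individual $m_j(D)$ into a bound for $\sum_j m_j(D)$ via one-dimensional Littlewood--Paley with values in $L^p(\R^{n-1};\ell^2)$ requires either UMD-valued square-function estimates or the observation that the $m_j(D)$ are Calder\'on--Zygmund operators with uniform constants, so that the diagonal operator $(g_j)_j\mapsto (m_j(D)g_j)_j$ is bounded on $L^p(\Rn;\ell^2_j(\ell^2))$. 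This can be made rigorous, but it is more than you have written.

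The paper's own proof is considerably shorter. The point you may have missed is that although the isotropic Mikhlin condition $|\,|\xi|^{|\alpha|}\partial_\xi^\alpha m(\xi)\,|\lesssim 1$ need not hold, the \emph{Marcinkiewicz--Lizorkin} condition $|\xi^\alpha\partial_\xi^\alpha m(\xi)|\lesssim 1$ does hold directly. Indeed, on $\supp(\ph_{e_1})$ one has $|\xi'|\lesssim |\xi|^{1/2}$ by \eqref{eq:phiwsupport}, so for $\alpha=(\alpha_1,\alpha')$ the factor $|\xi^\alpha|\lesssim |\xi|^{\alpha_1+|\alpha'|/2}$ exactly matches the anisotropic decay $|\xi|^{(n-1)/4-\alpha_1-|\alpha'|/2}$ from \eqref{eq:phiwgrowth}. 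A single application of the Banach-space-valued Marcinkiewicz--Lizorkin theorem (e.g.\ \cite[Proposition~3]{Zimmermann89}) then finishes the proof in one stroke, with no decomposition or summation needed. Your route works but is longer; the paper's observation that the coordinate-wise condition holds directly is the more efficient one.
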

\begin{proof}
By applying a rotation, using the definition of $\ph_{\w}$ and the assumption that $\ph$ is radial, it suffices to show that $\lb D\rb^{-\frac{n-1}{4}}\ph_{e_{1}}(D)\in \La(L^{p}(\Rn;\ell^{2}))$. To this end we claim that, for every $\alpha\in\Z_{+}^{n}$, one has
\begin{equation}\label{eq:anisotropic}
\sup_{\xi\in\Rn}\big|\xi^{\alpha}\partial_{\xi}^{\alpha}\big(\lb\xi\rb^{-\frac{n-1}{4}}\ph_{e_{1}}(\xi)\big)\big|<\infty.
\end{equation}
Taking the claim for granted momentarily, it then follows from the Marcinkiewicz--Lizorkin multiplier theorem (see e.g.~\cite[Proposition 3]{Zimmermann89} for a version with values in a Banach space) that $\lb D\rb^{-\frac{n-1}{4}}\ph_{e_{1}}(D)\in \La(L^{p}(\Rn;\ell^{2}))$.

We conclude by proving \eqref{eq:anisotropic}. Let $\alpha=(\alpha_{1},\ldots,\alpha_{n})\in\Z_{+}^{n}$ and $\xi=(\xi_{1},\ldots,\xi_{n})\in\supp(\ph_{e_{1}})$, and write $\xi':=\xi-\xi_{1}e_{1}=(0,\xi_{2},\ldots,\xi_{n})$. Then, by \eqref{eq:phiwsupport}, one has
\[
|\xi'|^{2}\leq (\xi_{1}-|\xi|)^{2}+\xi_{2}^{2}+\ldots+\xi_{n}^{2}=|\xi-|\xi|e_{1}|^{2}=|\xi|^{2}|\hat{\xi}-e_{1}|\lesssim |\xi|.
\]
Hence, with $\alpha':=(0,\alpha_{2},\ldots,\alpha_{n})$, \eqref{eq:phiwgrowth} yields
\begin{align*}
|\xi^{\alpha}\partial_{\xi}^{\alpha}\ph_{e_{1}}(\xi)|=|\xi_{1}^{\alpha_{1}}(\xi')^{\alpha'}\partial_{\xi_{1}}^{\alpha_{1}}\partial_{\xi}^{\alpha'}\ph_{e_{1}}(\xi)|\lesssim |\xi|^{\alpha_{1}+\frac{\alpha'}{2}}|\xi|^{\frac{n-1}{4}-\alpha_{1}-\frac{\alpha'}{2}}=|\xi|^{\frac{n-1}{4}}.
\end{align*}
Now \eqref{eq:anisotropic} follows from the Leibniz rule.
\end{proof}

\begin{remark}\label{rem:CalZyg}
As was pointed out by the referee, $\lb D\rb^{-(n-1)/4}\ph_{e_{1}}(D)$ is in fact an anisotropic Calderon--Zygmund operator, associated to the dilation $\xi\mapsto (r^{2}\xi_{1},r\xi')$ for $r>1$. Hence it also follows from the general theory of anisotropic Calderon--Zygmund operators that $\lb D\rb^{-\frac{n-1}{4}}\ph_{e_{1}}(D)\in \La(L^{p}(\Rn;\ell^{2}))$. See, for example, \cite{Bownik03} in the scalar-valued case, with the extension to functions with values in a Hilbert space being standard (see \cite[Chapter I]{Stein93}).
\end{remark}

\subsection{Rough symbols}\label{subsec:symbols}

We first introduce some classes of rough symbols, and then we describe a smoothing procedure for such symbols. 

Let $(\psi_{j})_{j=0}^{\infty}\subseteq C^{\infty}_{c}(\Rn)$ be the Littlewood--Paley decomposition from \eqref{eq:LitPaldecomp}. For $r>0$ we let the \emph{Zygmund space} $C^{r}_{*}(\Rn)$ consist of all $f\in\Sw'(\Rn)$ such that $\psi_{j}(D)f\in L^{\infty}(\Rn)$ for all $j\geq0$, with
\[
\|f\|_{C^{r}_{*}(\Rn)}:=\sup_{j\in\Z_{+}}2^{jr}\|\psi_{j}(D)f\|_{L^{\infty}(\Rn)}<\infty.
\]
Then (see~\cite{Triebel10})
\begin{equation}\label{eq:Zygmund1}
\Hrinf(\Rn)\subsetneq C^{r}_{*}(\Rn)=C^{r}(\Rn)\cap L^{\infty}(\Rn)
\end{equation}
if $r\notin\N$, and
\begin{equation}\label{eq:Zygmund2}
C^{r-1,1}(\Rn)\cap L^{\infty}(\Rn)\subsetneq \Hrinf(\Rn)\subsetneq C^{r}_{*}(\Rn)
\end{equation}
if $r\in\N$. Here $\Hrinf(\Rn)$ is as in \eqref{eq:classical}. Moreover, for $r=l+s\notin\N$ with $l\in\Z_{+}$ and $s\in(0,1)$, $C^{r}(\Rn)$ consists of those $f\in C^{l}(\Rn)$ such that for each $\alpha\in\Z_{+}^{n}$ with $|\alpha|=l$, the partial derivative $\partial_{x}^{\alpha}f$ is H\"{o}lder continuous with parameter $s$. For $r\in \N$, $C^{r-1,1}(\Rn)$ consists of those $f\in C^{r-1}(\Rn)$ such that $\partial_{x}^{\alpha}f$ is Lipschitz for all $\alpha\in\Z_{+}^{n}$ with $|\alpha|=r-1$. We also note in passing that $C^{r}_{*}(\Rn)$ is equal to the Besov space $B^{r}_{\infty,\infty}(\Rn)$ for all $r>0$.

Recall that, for $m\in\R$ and $\delta\in[0,1]$, the H\"{o}rmander symbol class $S^{m}_{1,\delta}$ is the space of $a\in C^{\infty}(\R^{2n})$ such that, for all $\alpha,\beta\in\Z_{+}^{n}$, there exists a $C_{\alpha,\beta}\geq0$ with
\[
|\partial_{x}^{\beta}\partial_{\eta}^{\alpha}a(x,\eta)|\leq C_{\alpha,\beta}\lb\eta\rb^{m-|\alpha|+\delta|\beta|}
\]
for all $x,\eta\in\Rn$. We will be interested in versions of these symbols that are rough in the $x$ variable. 

\begin{definition}\label{def:rough}
Let $r>0$, $m\in\R$ and $\delta\in[0,1]$. Then $C^{r}_{*}S^{m}_{1,\delta}$ is the collection of $a:\R^{2n}\to\C$ such that for each $\alpha\in\Z_{+}^{n}$ there exists a $C_{\alpha}\geq 0$ with the following properties:
\begin{enumerate}
\item\label{it:symbol1} For all $x,\eta\in\Rn$ one has $a(x,\cdot)\in C^{\infty}(\Rn)$ and
\begin{equation}\label{eq:symbol1}
|\partial_{\eta}^{\alpha}a(x,\eta)|\leq C_{\alpha}\lb\eta\rb^{m-|\alpha|}.
\end{equation}
\item\label{it:symbol2} For all $\eta\in\Rn$ one has $\partial_{\eta}^{\alpha}a(\cdot,\eta)\in C^{r}_{*}(\Rn)$ and
\begin{equation}\label{eq:symbol2}
\|\partial_{\eta}^{\alpha}a(\cdot,\eta)\|_{C^{r}_{*}(\Rn)}\leq C_{\alpha}\lb\eta\rb^{m-|\alpha|+r\delta}.
\end{equation}
\end{enumerate} 
Moreover, $\Hrinf S^{m}_{1,\delta}$ is the collection of $a\in C^{r}_{*}S^{m}_{1,\delta}$ such that, in \eqref{it:symbol2}, for all $\eta\in\Rn$ one has $\partial_{\eta}^{\alpha}a(\cdot,\eta)\in \Hrinf(\Rn)$ and 
\[
\|\partial_{\eta}^{\alpha}a(\cdot,\eta)\|_{\Hrinf(\Rn)}\leq C_{\alpha}\lb\eta\rb^{m-|\alpha|+r\delta}.
\]
\end{definition}

Note that $S^{m}_{1,\delta}\subsetneq \Hrinf S^{m}_{1,\delta}\subsetneq C^{r}_{*}S^{m}_{1,\delta}$ for all $r>0$, $m\in\R$ and $\delta\in[0,1]$. 

Given $a\in C^{r}_{*}S^{m}_{1,\delta}$ for some $r>0$, $m\in\R$ and $\delta\in[0,1]$, we define the pseudodifferential operator $a(x,D):\Sw(\Rn)\to\Sw'(\Rn)$ with symbol $a$ by
\begin{equation}\label{eq:pseudodef}
a(x,D)f(x):=\frac{1}{(2\pi)^{n}}\int_{\Rn}e^{ix\eta}a(x,\eta)\wh{f}(\eta)\ud\eta
\end{equation}
for $f\in\Sw(\Rn)$ and $x\in\Rn$.

\begin{remark}\label{rem:dualitydef}
In \eqref{eq:pseudodef} we only defined $a(x,D)f$ for $f\in\Sw(\Rn)$, whereas we will consider the boundedness of $a(x,D)$ on $\HT^{s,p}(\Rn)$ and $\Hps$, for $p\in[1,\infty]$ and $s\in\R$. For $p<\infty$ the Schwartz functions lie dense in $\HT^{s,p}(\Rn)$ and $\Hps$ for all $s\in\R$ (see \cite[Proposition 6.6]{HaPoRo20}), so by obtaining suitable norm bounds for $f\in\Sw(\Rn)$ we can consider the unique extension of $a(x,D)$ to these spaces. However, for $p=\infty$ the Schwartz functions are not dense in $\HT^{s,\infty}(\Rn)$ and $\HT^{s,\infty}_{FIO}(\Rn)$, and extending \eqref{eq:pseudodef} by adjoint action to $f\in\Sw'(\Rn)$ is problematic both due to the low regularity of $a$ and, even in the smooth case, for $\delta=1$ and $a\in S^{m}_{1,1}$. For suitable $s\in\R$ one can define $a(x,D)f$ for $f\in \HT^{s,\infty}(\Rn)$ or $f\in\HT^{s,\infty}_{FIO}(\Rn)$ by duality, using also a symbol decomposition as in the proof of Theorem \ref{thm:critical} (see~\cite{Marschall88} or~\cite[Section 1.2]{Taylor00}). However, because the case $p=\infty$ plays a very minor role in this article, we will not concern ourselves with such matters. Instead, when we write $a(x,D):X\to Y$ for function spaces $X$ and $Y$ over $\Rn$ that contain $\Sw(\Rn)$, we simply mean that $\|a(x,D)f\|_{Y}\lesssim \|f\|_{X}$ for all $f\in\Sw(\Rn)$.
\end{remark}

 A basic pseudodifferential operator is the multiplication operator $a(x,D)$ associated with a function $a\in\Hrinf(\Rn)\subseteq \Hrinf S^{0}_{1,0}$. Such multiplication operators in turn give rise to other pseudodifferential operators, through paradifferential calculus. For example, for $r>0$ and $b\in \Hrinf(\Rn)$, we define the high-high paraproduct operator $R_{b}$ and the high-low paraproduct operator $\pi_{b}$ as follows. For $f\in\Sw(\Rn)$ and $x\in\Rn$ set
\begin{equation}\label{eq:hihi}
R_{b}(f)(x):=\sum_{k=0}^{\infty}\sum_{j=k-5}^{k+5}(\psi_{j}(D)b)(x)(\psi_{k}(D)f)(x)
\end{equation}
and
\begin{equation}\label{eq:hilo}
\pi_{b}(f)(x):=\sum_{k=0}^{\infty}\sum_{j=k+6}^{\infty}(\psi_{j}(D)b)(x)(\psi_{k}(D)f)(x).
\end{equation}
One can check (see also the proof of Lemma \ref{lem:smoothing}) that $R_{b}$ and $\pi_{b}$ are pseudodifferential operators with symbols in $S^{-r}_{1,1}$ and $\Hrinf S^{-r}_{1,1}$, respectively.  
A minor role will be played in the proof of Lemma \ref{lem:pseudoLp} by the following mapping property of these operators, the proof of which can be skipping during a first reading.

\begin{lemma}\label{lem:paraproducts}
Let $r>0$ and $b\in\Hrinf(\Rn)$. Then
\[
R_{b},\pi_{b}:\HT^{-r,p}(\Rn)\to\HT^{-\veps,p}(\Rn)
\]
for all $p\in[1,\infty]$ and $\veps>0$.
\end{lemma}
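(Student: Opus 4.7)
The plan is to exploit the Littlewood--Paley structure of the Sobolev spaces $\HT^{s,p}(\Rn)$ together with the tight control on the Fourier supports of the Bony pieces. Throughout, write $b_{j}:=\psi_{j}(D)b$, $f_{k}:=\psi_{k}(D)f$, and $S_{N}f:=\sum_{k\leq N}f_{k}$. Since $b\in\Hrinf(\Rn)\subseteq C^{r}_{*}(\Rn)$, one has $\|b_{j}\|_{L^{\infty}(\Rn)}\lesssim 2^{-jr}\|b\|_{C^{r}_{*}}$. I treat $p\in(1,\infty)$ first; the endpoints $p=1$ and $p=\infty$ follow by minor adjustments.

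For $\pi_{b}$, rewrite $\pi_{b}(f)=\sum_{j\geq 6}b_{j}S_{j-6}f$, so each piece $h'_{j}:=b_{j}S_{j-6}f$ has Fourier support in the annulus $\{|\xi|\sim 2^{j}\}$. A Bernstein-type bound $\|S_{N}f\|_{L^{p}}\lesssim 2^{Nr}\|f\|_{\HT^{-r,p}}$, obtained by writing $f=\lb D\rb^{r}g$ with $\|g\|_{L^{p}}=\|f\|_{\HT^{-r,p}}$ and noting that the multiplier $\chi(2^{-N}\cdot)\lb\cdot\rb^{r}$ equals $2^{Nr}$ times a rescaled Schwartz multiplier of uniformly bounded $L^{1}$-kernel, combines with $\|b_{j}\|_{\infty}\lesssim 2^{-jr}$ to yield $\|h'_{j}\|_{L^{p}}\lesssim\|b\|_{C^{r}_{*}}\|f\|_{\HT^{-r,p}}$ uniformly in $j$. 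Applying Lemma~\ref{lem:LitPal}(\ref{it:LitPal1}) to the annulus-supported $h'_{j}$ together with the trivial pointwise inequality $(\sum_{j}a_{j}^{2})^{1/2}\leq\sum_{j}|a_{j}|$ then gives
\[
\|\pi_{b}(f)\|_{\HT^{-\veps,p}}\lesssim\Big\|\Big(\sum_{j\geq 6}4^{-j\veps}|h'_{j}|^{2}\Big)^{1/2}\Big\|_{L^{p}}\leq\sum_{j\geq 6}2^{-j\veps}\|h'_{j}\|_{L^{p}}\lesssim\|b\|_{C^{r}_{*}}\|f\|_{\HT^{-r,p}},
\]
the geometric series converging precisely because $\veps>0$.

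For $R_{b}$, the $k$-th piece $G_{k}:=\sum_{|j-k|\leq 5}b_{j}f_{k}$ has Fourier support only in the ball $\{|\xi|\lesssim 2^{k}\}$, so Lemma~\ref{lem:LitPal}(\ref{it:LitPal1}) does not apply directly. Instead I would invoke the standard Triebel--Lizorkin characterization for sums of ball-supported pieces, valid for any $s<0$ and $p\in(1,\infty)$: if $u=\sum_{k}u_{k}$ with $\wh{u_{k}}$ supported in $\{|\xi|\lesssim 2^{k}\}$, then $\|u\|_{\HT^{s,p}}\lesssim\|(\sum_{k}4^{ks}|u_{k}|^{2})^{1/2}\|_{L^{p}}$. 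Combined with the pointwise bound $|G_{k}(x)|\lesssim 2^{-kr}\|b\|_{C^{r}_{*}}|f_{k}(x)|$ and Lemma~\ref{lem:LitPal}(\ref{it:LitPal2}) applied to $\HT^{-(r+\veps),p}$, this yields
\[
\|R_{b}(f)\|_{\HT^{-\veps,p}}\lesssim\|b\|_{C^{r}_{*}}\Big\|\Big(\sum_{k}4^{-k(r+\veps)}|f_{k}|^{2}\Big)^{1/2}\Big\|_{L^{p}}\sim\|b\|_{C^{r}_{*}}\|f\|_{\HT^{-(r+\veps),p}}\leq\|b\|_{C^{r}_{*}}\|f\|_{\HT^{-r,p}},
\]
the last step being the Sobolev embedding $\HT^{-r,p}\hookrightarrow\HT^{-(r+\veps),p}$.

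The endpoints $p=1$ and $p=\infty$ proceed along the same lines, using the Littlewood--Paley characterization of $\HT^{1}(\Rn)$ for $p=1$, and an analogous $\bmo$-characterization (or duality $(\HT^{-r,1})^{*}=\HT^{r,\infty}$ after verifying that the transpose of $\pi_{b}$ or $R_{b}$ is of a suitable paradifferential form) for $p=\infty$. The main obstacle is the $R_{b}$ case: unlike the pieces of $\pi_{b}$, which live in annuli and so fit directly into Lemma~\ref{lem:LitPal}, the ball-supported pieces of $R_{b}$ force one to appeal to the Triebel--Lizorkin inequality for $s<0$, and this is precisely where the strict inequality $\veps>0$ is essential---it cannot be relaxed to $\veps=0$, consistent with the well-known fact that high-high paraproducts at critical regularity produce output only in $W^{-\veps,p}$ rather than $L^{p}$.
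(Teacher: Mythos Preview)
Your treatment of $\pi_{b}$ for $p\in(1,\infty)$ is correct and is close to the paper's own direct argument for that operator. The gap is in your treatment of $R_{b}$: the ``Triebel--Lizorkin characterization for sums of ball-supported pieces, valid for any $s<0$'' that you invoke is false. Take a nonzero $u_{0}$ with $\widehat{u_{0}}$ supported in the unit ball and set $u_{k}:=u_{0}$ for $0\leq k\leq N$ and $u_{k}:=0$ otherwise; then each $\widehat{u_{k}}$ lies in $\{|\xi|\lesssim 2^{k}\}$, the quantity $\big\|(\sum_{k}4^{-k\veps}|u_{k}|^{2})^{1/2}\big\|_{L^{p}}$ is bounded uniformly in $N$, yet $\|\sum_{k}u_{k}\|_{\HT^{-\veps,p}}=(N+1)\|u_{0}\|_{\HT^{-\veps,p}}\to\infty$. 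The standard almost-orthogonality lemma for ball-supported pieces requires $s>0$ precisely because these low-frequency overlaps are otherwise uncontrolled. If you try to salvage the argument by expanding $\psi_{l}(D)R_{b}f=\sum_{k\geq l-c}\psi_{l}(D)G_{k}$ and applying Cauchy--Schwarz in $k$, you only reach $R_{b}:\HT^{-r+\delta,p}\to\HT^{-\veps,p}$ for each $\delta>0$, missing the endpoint $\HT^{-r,p}$.

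The paper avoids this obstruction by first moving $r$ derivatives off $b$ (as in \cite[Propositions~3.5.B and~3.5.F]{Taylor91}), reducing matters to showing $R_{c},\pi_{c}:\HT^{0,p}\to\HT^{-\veps,p}$ for $c\in\bmo(\Rn)$. For $1<p<\infty$ this is immediate since $R_{c}$ is a Calder\'on--Zygmund operator; for $p=1$ one uses the $H^{1}\to L^{1}$ bound for such operators together with the embedding $L^{1}\subseteq\HT^{-\veps,1}$; for $p=\infty$ the paper appeals to bilinear Coifman--Meyer theory. Your suggested endpoint arguments by ``duality'' or by the $\HT^{1}$ Littlewood--Paley characterization would run into the same ball-support issue for $R_{b}$, so the reduction to $\bmo$ coefficients is not merely cosmetic but is what makes the endpoint work.
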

\begin{proof}
By decomposing $b$ into a sum of finitely many functions with restricted Fourier supports and then moving derivatives around as in the proofs of~\cite[Proposition 3.5.B and Proposition 3.5.F]{Taylor91}, it suffices to show that for each $c\in\HT^{0,\infty}(\Rn)=\bmo(\Rn)$ one has
\begin{equation}\label{eq:parareduced}
R_{c},\pi_{c}:\HT^{0,p}(\Rn)\to\HT^{-\veps,p}(\Rn).
\end{equation}
Since $R_{c}$ and $\pi_{c}$ are Calderon-Zygmund operators, as follows for example from~\cite[Lemma 3.5.E]{Taylor91}, one then immediately obtains \eqref{eq:parareduced} for $1<p<\infty$. 

To deal with $p=1$ and $p=\infty$, we argue in different ways for $R_{c}$ and $\pi_{c}$. We first consider $\pi_{c}$, and we give an argument which in fact directly works for all $p\in[1,\infty]$. Let $f\in \Sw(\Rn)$ and use the support properties of $\psi_{k}$, as well as the fact that $(\F^{-1}(\psi_{k}))_{k=0}^{\infty}\subseteq L^{1}(\Rn)$ is uniformly bounded, to find an $M\in\Z_{+}$ such that
\begin{align*}
&2^{-l\veps/2}\|\psi_{l}(D)\pi_{c}(f)\|_{L^{p}}\leq 2^{-l\veps/2}\sum_{k=0}^{\infty}\sum_{j=k+6}^{\infty}\big\|\psi_{l}(D)\big((\psi_{j}(D)c)(\psi_{k}(D)f)\big)\big\|_{L^{p}}\\
&=2^{-l\veps/2}\sum_{j=l-M}^{l+M}\sum_{k=0}^{j-6}\big\|\psi_{l}(D)\big((\psi_{j}(D)c)(\psi_{k}(D)f)\big)\big\|_{L^{p}}\\
&\lesssim 2^{-l\veps/2}\sum_{j=l-M}^{l+M}\sum_{k=0}^{j-6}\|(\psi_{j}(D)c)(\psi_{k}(D)f)\|_{L^{p}}\\
&\lesssim \sum_{j=l-M}^{l+M}2^{-j\veps/2}\sum_{k=0}^{j-6}\|\psi_{j}(D)c\|_{L^{\infty}}\|\psi_{k}(D)f\|_{L^{p}}\lesssim j2^{-j\veps/2}\|c\|_{\bmo}\sup_{k\geq0}\|\psi_{k}(D)f\|_{L^{p}}
\end{align*}
for all $l\geq0$. For the final inequality we used that $\|\psi_{j}(D)c\|_{L^{\infty}}\eqsim \|\psi_{j}(D)c\|_{\bmo}\lesssim \|c\|_{\bmo}$, a standard fact which follows for example from the Littlewood-Paley decomposition of $H^{1}(\Rn)$ and duality, combined with the uniform boundedness of $(\F^{-1}(\psi_{k}))_{k=0}^{\infty}$ in $L^{1}(\Rn)$. Now the Besov space embeddings
\begin{equation}\label{eq:Besovemb}
\HT^{0,p}(\Rn)\subseteq B^{0}_{p,\infty}(\Rn)\subseteq B^{-\veps}_{p,1}(\Rn)\subseteq \HT^{-\veps,p}(\Rn)
\end{equation}
from~\cite[Sections 2.3.2, 2.5.7 and 2.5.8]{Triebel10} conclude the proof for $\pi_{c}$.

Next, for $R_{c}$ we note that, since $R_{c}$ is a Calderon-Zygmund operator,~\cite[Theorem 4.2.6]{Grafakos14b} yields $R_{c}:H^{1}(\Rn)\to L^{1}(\Rn)$. Combined with the embedding $L^{1}(\Rn)\subseteq\HT^{-\veps,1}(\Rn)$, which in turn follows from \eqref{eq:Besovemb} because $L^{1}(\Rn)\subseteq B^{0}_{1,\infty}(\Rn)$, we can use \eqref{eq:H1norm} to see that 
\[
R_{c}(1-q)(D):\HT^{1}(\Rn)\to L^{1}(\Rn)\subseteq \HT^{-\veps,1}(\Rn).
\]
Moreover, since $q\in C^{\infty}_{c}(\Rn)$, there exists an $N\geq0$ such that
\begin{align*}
\|R_{c}(q(D)f)\|_{L^{1}}&\leq \sum_{k=0}^{N}\sum_{j=k-5}^{k+5}\|(\psi_{j}(D)c)(\psi_{k}(D)q(D)f)\|_{L^{1}}\\
&\lesssim \sum_{k=0}^{N}\sum_{j=k-5}^{k+5}\|\psi_{j}(D)c\|_{L^{\infty}}\|\psi_{k}(D)q(D)f\|_{L^{1}}\\
&\lesssim \sum_{k=0}^{N}\sum_{j=k-5}^{k+5}\|c\|_{\bmo}\|q(D)f\|_{L^{1}}\lesssim \|c\|_{\bmo}\|f\|_{\HT^{1}}
\end{align*}
for all $f\in \Sw(\Rn)$. This proves \eqref{eq:parareduced} for $R_{c}$ and $p=1$.

Finally, for $p=\infty$ one can use~\cite[Lemma 3.3, Definition 3.11 and Corollary 7.3]{Rodriguez-Staubach15}, since $(c,f)\mapsto R_{c}(f)$ is a bilinear Coifman-Meyer multiplier.
\end{proof}

We next describe a symbol smoothing procedure that decomposes a rough symbol as a sum of a smooth part and a rough part with additional decay. Let $a\in C^{r}_{*}S^{m}_{1,\delta}$ for $r>0$, $m\in\R$ and $\delta\in[0,1]$. Let $\gamma\in[0,1]$ be given, and recall from Section \ref{subsec:Hardy} that $\ph\in C^{\infty}_{c}(\Rn)$ satisfies $\ph\equiv1$ near zero. Now set, for $(x,\eta)\in\R^{2n}$,
\[
a^{\sharp}_{\gamma}(x,\eta):=\sum_{k=0}^{\infty}\big(\ph(2^{-\gamma k}D)a(\cdot,\eta)\big)(x)\psi_{k}(\eta)
\]
and
\[
a^{\flat}_{\gamma}(x,\eta):=a(x,\eta)-a^{\sharp}_{\gamma}(x,\eta)=\sum_{k=0}^{\infty}\big((1-\ph)(2^{-\gamma k}D)a(\cdot,\eta)\big)(x)\psi_{k}(\eta).
\]
For the final identity we used that $\sum_{k=0}^{\infty}\psi_{k}(\eta)=1$. The decomposition $a=a^{\sharp}_{\gamma}+a^{\flat}_{\gamma}$ has the following properties.

\begin{lemma}\label{lem:smoothing}
Let $r>0$, $m\in\R$ and $\delta,\gamma\in[0,1]$ with $\gamma\geq\delta$. Then for each $a\in C^{r}_{*}S^{m}_{1,\delta}$ one has $a^{\sharp}_{\gamma}\in S^{m}_{1,\gamma}$ and $a^{\flat}_{\gamma}\in C^{r}_{*}S^{m-(\gamma-\delta)r}_{1,\gamma}$. If $a\in \Hrinf S^{m}_{1,\delta}$ then additionally $a^{\flat}_{\gamma}\in \Hrinf S^{m-(\gamma-\delta)r}_{1,\gamma}$.
\end{lemma}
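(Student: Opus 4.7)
The plan is to prove the two statements separately, exploiting two complementary properties of the low-frequency cutoffs $\varphi(2^{-\gamma k}D)$: Bernstein-type spatial derivative gains for the smooth part $a^\sharp_\gamma$, and a $C^r_*$-controlled decay for the high-frequency part $a^\flat_\gamma$. Throughout, the key observation is that for each $\eta\in\Rn$ only finitely many $k$ contribute to the sums, because $\psi_k(\eta)\neq 0$ forces $2^k\eqsim\lb\eta\rb$ (for $k\geq 1$); all $\eta$-derivatives of $\psi_k$ then contribute factors of $\lb\eta\rb^{-|\alpha''|}$ by Leibniz.

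For $a^\sharp_\gamma\in S^m_{1,\gamma}$, I first note that $\varphi(2^{-\gamma k}D)a(\cdot,\eta)$ has Fourier support in $\{|\xi|\lesssim 2^{\gamma k}\}$. Since $\varphi(2^{-\gamma k}D)$ is convolution with a kernel of uniformly bounded $L^1$ norm, we have $\|\varphi(2^{-\gamma k}D)a(\cdot,\eta)\|_{L^\infty}\lesssim\lb\eta\rb^m$. Bernstein's inequality then upgrades this to $|\partial_x^\beta \varphi(2^{-\gamma k}D)a(\cdot,\eta)|\lesssim 2^{\gamma k|\beta|}\lb\eta\rb^m$. Commuting $\partial_\eta^\alpha$ past the Fourier multiplier (they act in different variables), applying this estimate to $\partial_\eta^{\alpha'}a$, and using $2^{\gamma k}\eqsim\lb\eta\rb^\gamma$ on the support of $\psi_k$ yields $|\partial_x^\beta\partial_\eta^\alpha a^\sharp_\gamma(x,\eta)|\lesssim\lb\eta\rb^{m-|\alpha|+\gamma|\beta|}$, which is the $S^m_{1,\gamma}$ bound.

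For $a^\flat_\gamma$, the heart of the matter is the pair of estimates, for $h\in C^r_*(\Rn)$:
\begin{equation*}
\|(1-\varphi)(2^{-\gamma k}D)h\|_{L^\infty}\lesssim 2^{-\gamma kr}\|h\|_{C^r_*},\qquad \|(1-\varphi)(2^{-\gamma k}D)h\|_{C^r_*}\lesssim\|h\|_{C^r_*},
\end{equation*}
both uniform in $k$. The second follows because, writing $(1-\varphi)(2^{-\gamma k}D)=I-\varphi(2^{-\gamma k}D)$ and using $\psi_j(D)\varphi(2^{-\gamma k}D)=\bigl(\psi_j\varphi(2^{-\gamma k}\cdot)\bigr)(D)$, the composition $\psi_j(D)(1-\varphi)(2^{-\gamma k}D)$ equals $\psi_j(D)$ for $j$ sufficiently larger than $\gamma k$, vanishes for $j$ sufficiently smaller, and is a uniformly bounded Fourier multiplier supported on $\{|\xi|\eqsim 2^j\}$ in between, so it acts on $L^\infty$ with norm $O(1)$ uniformly. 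The first estimate follows by Littlewood--Paley decomposing $(1-\varphi)(2^{-\gamma k}D)h=\sum_{j\gtrsim\gamma k}\psi_j(D)(1-\varphi)(2^{-\gamma k}D)h$, bounding each term by $2^{-jr}\|h\|_{C^r_*}$, and summing a geometric series. Applying these to $h=\partial_\eta^{\alpha'}a(\cdot,\eta)$, using $\|\partial_\eta^{\alpha'}a(\cdot,\eta)\|_{C^r_*}\lesssim\lb\eta\rb^{m-|\alpha'|+r\delta}$ and $2^{-\gamma kr}\eqsim\lb\eta\rb^{-\gamma r}$ on $\supp(\psi_k)$, Leibniz and summation over $k$ give $|\partial_\eta^\alpha a^\flat_\gamma(x,\eta)|\lesssim\lb\eta\rb^{m-|\alpha|-(\gamma-\delta)r}$ and $\|\partial_\eta^\alpha a^\flat_\gamma(\cdot,\eta)\|_{C^r_*}\lesssim\lb\eta\rb^{m-|\alpha|+r\delta}=\lb\eta\rb^{(m-(\gamma-\delta)r)-|\alpha|+r\gamma}$, matching the required $C^r_*S^{m-(\gamma-\delta)r}_{1,\gamma}$ bounds.

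The $\Hrinf$ variant then follows by replacing $C^r_*$ by $\Hrinf$ throughout, using the analogous boundedness of $(1-\varphi)(2^{-\gamma k}D)$ on $\Hrinf=\lb D\rb^{-r}\bmo$; this in turn reduces to the uniform Mikhlin property $|\partial^\alpha\varphi(2^{-\gamma k}\xi)|\lesssim|\xi|^{-|\alpha|}$, combined with the gain estimate above (which only uses the $C^r_*\supseteq\Hrinf$ side). The most delicate point—and the one I would treat most carefully—is the uniform boundedness of $(1-\varphi)(2^{-\gamma k}D)$ on the Zygmund space, since a naive application of an $L^\infty$-bound on $\varphi(2^{-\gamma k}D)$ alone would not suffice; the Littlewood--Paley argument sketched above is what makes this uniform in $k$, and it is the one technical hinge on which the entire lemma rests.
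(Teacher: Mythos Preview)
Your proof is correct and follows essentially the same route as the paper's: Bernstein/$L^1$-kernel estimates for $a^\sharp_\gamma$, and the Littlewood--Paley decomposition $\sum_{j\gtrsim\gamma k}\psi_j(D)(1-\varphi)(2^{-\gamma k}D)h$ summed geometrically for the $L^\infty$-gain on $a^\flat_\gamma$. The one place where the paper is slightly more economical is the uniform $C^r_*$-boundedness of $(1-\varphi)(2^{-\gamma k}D)$: rather than analysing $\psi_j(D)(1-\varphi)(2^{-\gamma k}D)$ case by case, the paper simply writes $(1-\varphi)(2^{-\gamma k}D)=I-\varphi(2^{-\gamma k}D)$ and notes that $\varphi(2^{-\gamma k}D)$ is convolution with a kernel of $L^1$-norm independent of $k$, hence commutes with each $\psi_j(D)$ and is $L^\infty$-bounded uniformly---so $C^r_*$ is preserved directly, and your concluding remark that this step is ``the one technical hinge'' somewhat overstates its difficulty.
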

\begin{proof}
The result is essentially contained in~\cite[Section 1.3]{Taylor91} (see also~\cite[Section 1.3]{Taylor00}), but for the convenience of the reader we sketch the argument here. Let $\alpha,\beta\in\Z_{+}^{n}$ and $\eta\in\Rn$. Since $\ph\in C^{\infty}_{c}(\Rn)$, we can use an $L^{1}$-estimate for $\partial_{x}^{\beta}\F^{-1}(\ph)$, as well as \eqref{eq:symbol1}, to obtain
\[
\big\|\partial_{x}^{\beta}\partial_{\eta}^{\alpha}\big(\ph(2^{-\gamma k}D)a(\cdot,\eta)\big)\big\|_{L^{\infty}}\lesssim 2^{k\gamma |\beta|}\|\partial_{\eta}^{\alpha}a(\cdot,\eta)\|_{L^{\infty}}\lesssim 2^{k\gamma|\beta|}\lb\eta\rb^{m-|\alpha|}
\]
for all $k\geq0$. Noting that $\lb\eta\rb\eqsim 2^{k}$ if $\eta\in\supp(\psi_{k})$, one can derive from this that $a^{\sharp}_{\gamma}\in S^{m}_{1,\gamma}$. 

On the other hand, for all $\xi\in\Rn$ and $j\geq0$ one has $\psi_{j}(\xi)=0$ if $|\xi|\notin [2^{j-2},2^{j}]$, and there exists a $c>0$ such that $\ph(\xi)=1$ if $|\xi|\leq c$. Hence, for some $M\geq0$, \eqref{eq:symbol2} yields
\begin{align*}
&\|\partial_{\eta}^{\alpha}(1-\ph)(2^{-\gamma k}D)a(\cdot,\eta)\|_{L^{\infty}}\leq \sum_{j\geq \gamma k-M}\|(1-\ph)(2^{-\gamma k}D)\psi_{j}(D)\partial_{\eta}^{\alpha}a(\cdot,\eta)\|_{L^{\infty}}\\
&\lesssim \sum_{j\geq\gamma k-M}\|\psi_{j}(D)\partial_{\eta}^{\alpha}a(\cdot,\eta)\|_{L^{\infty}}\leq \sum_{j\geq\gamma k-M}2^{-jr}\|\partial_{\eta}^{\alpha}a(\cdot,\eta)\|_{C^{r}_{*}}\lesssim 2^{-kr\gamma }\lb\eta\rb^{m-|\alpha|+r\delta}
\end{align*}
for all $k\geq0$. In the same manner as before, this implies that $\|\partial^{\alpha}_{\eta}a^{\flat}_{\gamma}(\cdot,\eta)\|_{L^{\infty}}\lesssim \lb\eta\rb^{m-(\gamma-\delta)r-|\alpha|}$. Finally, by \eqref{eq:symbol2} and because $C^{r}_{*}(\Rn)$ is preserved under convolution with elements of $L^{1}(\Rn)$, one has
\begin{equation}\label{eq:aflatbound}
\|\partial_{\eta}^{\alpha}(1-\ph)(2^{-\gamma k}D)a(\cdot,\eta)\|_{C^{r}_{*}}\lesssim \|\partial_{\eta}^{\alpha}a(\cdot,\eta)\|_{C^{r}_{*}}\lesssim 2^{m-|\alpha|+\delta r},
\end{equation} 
which in turn implies that $a^{\flat}_{\gamma}\in C^{r}_{*}S^{m-(\gamma-\delta)r}_{1,\gamma}$. If $a\in \Hrinf S^{m}_{1,\delta}$ then \eqref{eq:aflatbound} holds with $C^{r}_{*}$ replaced by $\Hrinf $, so that $a^{\flat}_{\gamma}\in \Hrinf S^{m-(\gamma-\delta)r}_{1,\gamma}$.
\end{proof}

\section{The subcritical case}\label{sec:subcritical}

In this section we prove results about rough pseudodifferential operators acting on Hardy spaces for Fourier integral operators, in the subcritical case. The results in this section suffice for many purposes, except that they do not yield boundedness on $\Hps$ for a pseudodifferential operator with symbol $a\in C^{r}_{*}S^{0}_{1,1/2}$ unless $p=2$. 

We first state an extension of a result from~\cite{Bourdaud82} about rough pseudodifferential operators acting on the classical function spaces $\HT^{s,p}(\Rn)$ from \eqref{eq:classical}. 

\begin{lemma}\label{lem:pseudoLp}
Let $r>0$, $m\in\R$, $\delta\in[0,1]$, $p\in[1,\infty]$ and $a\in C^{r}_{*}S^{m}_{1,\delta}$. Then 
\begin{equation}\label{eq:pseudoLp}
a(x,D):\HT^{s+m,p}(\Rn)\to\HT^{s,p}(\Rn)
\end{equation}
for $-(1-\delta)r<s<r$. If $\delta<1$ and $a\in \Hrinf S^{m}_{1,\delta}$, then \eqref{eq:pseudoLp} also holds for $s=r$. If additionally $a=b^{\flat}_{\delta}$ for some $b\in\Hrinf (\Rn)$, then \eqref{eq:pseudoLp} also holds for $s=-(1-\delta)r$ and $m=-\delta r$.
\end{lemma}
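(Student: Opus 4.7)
I would follow the symbol-smoothing and paradifferential strategy of \cite{Bourdaud82,Marschall88,Taylor91}. By Lemma~\ref{lem:smoothing}, write $a = a^{\sharp}_{\gamma} + a^{\flat}_{\gamma}$ for a suitable $\gamma \in (\delta, 1)$; the smooth part $a^{\sharp}_{\gamma} \in S^{m}_{1,\gamma}$ is bounded on all Sobolev scales by classical pseudodifferential theory (since $\gamma < 1$), so the task reduces to controlling the rough part $a^{\flat}_{\gamma}(x,D)$, which has reduced order $m - (\gamma - \delta)r$ while retaining $C^{r}_{*}$- (or $\Hrinf$-) regularity in $x$. For Part 1, the bound on $a^{\flat}_{\gamma}(x,D)$ in the range $-(1-\gamma)r < s < r$ is essentially Bourdaud's theorem \cite{Bourdaud82}, extended to the local Hardy and $\bmo$ endpoints via the framework of \eqref{eq:classical}; sending $\gamma$ close to $1$ produces the stated range $-(1-\delta)r < s < r$.

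For Part 2 at $s = r$ under the stronger $\Hrinf$-regularity and $\delta < 1$, I would further decompose $a^{\flat}_{\gamma}(x,\eta) = \sum_{k} a_{k}(x)\Psi_{k}(\eta)$ in the spirit of Coifman--Meyer, with $a_{k} \in \Hrinf$ and $\Psi_{k}$ dyadic cutoffs in $\eta$, and then bound $\sum_{k} a_{k}\cdot\Psi_{k}(D)f$ using a Littlewood--Paley characterization of $\HT^{r,p}$ combined with Lemma~\ref{lem:paraproducts} applied to the paraproducts $R_{a_{k}},\pi_{a_{k}}$; the $\Hrinf$-regularity is exactly what makes the endpoint $s=r$ accessible, via duality with $\HT^{-r,p'}$. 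For Part 3 at $s=-(1-\delta)r$ with $a=b^{\flat}_{\delta}$, I would instead work from the explicit formula $a(x,D)f = \sum_{k} b_{k}(x)\psi_{k}(D)f$, where $b_{k} = (1-\ph)(2^{-\delta k}D)b$ satisfies the Bernstein-type bound $\|b_{k}\|_{L^{\infty}} \lesssim 2^{-\delta k r}\|b\|_{\Hrinf}$. A further dyadic decomposition $b_{k} = \sum_{j \geq \delta k - O(1)}\psi_{j}(D)b$ produces a paraproduct-type double sum whose dominant piece (with $\delta k \leq j < k - O(1)$, so the summands have Fourier support in the annulus at scale $2^{k}$) is controlled via Lemma~\ref{lem:LitPal}; the weight identity $2^{-(1-\delta)kr}\cdot 2^{-\delta kr} = 2^{-kr}$ then matches the source $\HT^{-r,p}$ and target $\HT^{-(1-\delta)r,p}$ Littlewood--Paley norms of $f$ exactly. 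The residual high-high and high-low pieces are dispatched using Lemma~\ref{lem:paraproducts}.

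The hardest part will be the high-high paraproduct contribution in Part 3, whose summands can have Fourier support all the way down to $|\xi| \sim 1$, so that the frequency-by-frequency matching in Lemma~\ref{lem:LitPal} is not available. Here the positive gap $(1-\delta)r > 0$ between source and target exponents (which is available since $\delta < 1$ is required for this piece to be non-trivial) has to be exploited through a weighted Cauchy--Schwarz argument, arranged so that the $\ell^{1}$-in-$j$ summation collapses into the $\HT^{-r,p}$-Littlewood--Paley square function of $f$; this is analogous to, but finer than, the proof of Lemma~\ref{lem:paraproducts}.
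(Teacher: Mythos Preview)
Your approach to Part 3 is essentially the paper's, organized slightly differently. The paper writes $b^{\flat}_{\delta}(x,D) = R_{b} + \pi_{b} + c(x,D)$, handles $R_{b}+\pi_{b}$ via Lemma~\ref{lem:paraproducts} (with $\veps=(1-\delta)r>0$), and observes that the remainder $c$ satisfies the Bourdaud spectral condition $\F c(\cdot,\eta)(\xi)=0$ for $|\xi|\geq|\eta|/10$, so that \cite[Proposition~3.4.F]{Taylor91} gives $c(x,D):\HT^{t-\delta r,p}\to\HT^{t,p}$ for \emph{all} $t\in\R$. Your ``dominant piece'' ($\delta k\leq j<k-O(1)$) is exactly this $c(x,D)$, and your direct Littlewood--Paley estimate for it reproduces what that proposition gives; your ``residual high-high and high-low pieces'' are $R_{b}$ and $\pi_{b}$. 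So for Part 3 the two routes coincide, the paper simply outsourcing the low-high piece to the literature rather than redoing the square-function computation.

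Two corrections. First, Parts 1 and 2 are not proved here at all: the paper just quotes \cite[Theorems~2.2--2.4]{Marschall88}. Your symbol-smoothing reduction for Part 1 is circular, since applying ``Bourdaud's theorem'' to $a^{\flat}_{\gamma}\in C^{r}_{*}S^{m-(\gamma-\delta)r}_{1,\gamma}$ is precisely Part 1 with $\delta$ replaced by $\gamma$; moreover, sending $\gamma\to 1$ \emph{shrinks} the resulting interval $(-(1-\gamma)r,r)$ to $(0,r)$ rather than producing $(-(1-\delta)r,r)$, so the limit goes the wrong way. Second, your final paragraph overstates the difficulty of the high-high piece: Lemma~\ref{lem:paraproducts} already yields $R_{b}:\HT^{-r,p}\to\HT^{-\veps,p}$ for every $\veps>0$, and taking $\veps=(1-\delta)r$ is exactly what is needed at the endpoint $s=-(1-\delta)r$. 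No argument finer than Lemma~\ref{lem:paraproducts} is required.
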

\begin{proof}
For the first statement see~\cite[Theorems 2.3 and 2.4]{Marschall88}. The second statement, concerning $s=r$ in \eqref{eq:pseudoLp}, is contained in~\cite[Theorem 2.2]{Marschall88}. For the final statement, let $R_{b}$ and $\pi_{b}$ be as in \eqref{eq:hihi} and \eqref{eq:hilo}, respectively, and write $b^{\flat}_{\delta}(x,D)=R_{b}+\pi_{b}+b^{\flat}_{\delta}(x,D)-R_{b}-\pi_{b}$. Lemma \ref{lem:paraproducts} shows that
\[
R_{b}+\pi_{b}:\HT^{-r,p}(\Rn)\to\HT^{-(1-\delta)r,p}(\Rn).
\]
Moreover, the symbol $c$ of $b^{\flat}_{\delta}(x,D)-R_{b}-\pi_{b}$ is given by $c=c_{1}+c_{2}$, where 
\[
c_{1}(x,\eta):=\sum_{k=1}^{\infty}\Big(-\ph(2^{-\delta k}D)+\sum_{j=0}^{k-6}\psi_{j}(D)\Big)b(\cdot,\eta)(x)\psi_{k}(\eta)
\] 
and
\[
c_{2}(x,\eta):=-\ph(D)b(\cdot,\eta)(x)\psi_{0}(\eta)
\]
for $x,\eta\in\Rn$. For $\ph$ supported sufficiently close to zero, one has $\F c_{1}(\cdot,\eta)(\xi)=0$ if $|\xi|\geq |\eta|/10$. Moreover, as in the proof of Lemma \ref{lem:smoothing} one can show that for all $\alpha,\beta\in\Z_{+}^{n}$ there exists a $C_{\alpha,\beta}\geq0$ such that
\[
|\partial_{x}^{\beta}\partial_{\xi}^{\alpha}c_{1}(x,\eta)|\leq C_{\alpha,\beta} \lb \eta\rb^{|\beta|-|\alpha|}
\]
for all $x,\eta\in\Rn$. Now~\cite[Proposition 3.4.F]{Taylor91} yields
\[
c_{1}(x,D):\HT^{t-\delta r,p}(\Rn)\to\HT^{t,p}(\Rn)
\]
for all $t\in\R$, and specifically for $t=s$. One can check directly that the same mapping property holds for $c_{2}(x,D)$. This concludes the proof.
\end{proof}

We next state a result about pseudodifferential operators with smooth symbols on $\Hps$. This lemma reduces to a theorem in~\cite{HaPoRo20}, but we note that it can also be proved in a similar manner as Theorem \ref{thm:critical} below (see Remark \ref{rem:smooth}).

\begin{lemma}\label{lem:smoothpseudo}
Let $m\in\R$ and $a\in S^{m}_{1,1/2}$. Then 
\[
a(x,D):\HT^{s+m,p}_{FIO}(\Rn)\to\Hps
\]
for all $p\in[1,\infty]$ and $s\in\R$.
\end{lemma}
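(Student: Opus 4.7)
The plan is to reduce the assertion to the known boundedness of $S^{0}_{1,1/2}$ pseudodifferential operators on $\Hp$ for all $p\in[1,\infty]$, which was proved in~\cite{HaPoRo20} and is recalled in the overview of the proof above. By the definition $\|f\|_{\Hps} = \|\lb D\rb^{s} f\|_{\Hp}$, after substituting $g = \lb D\rb^{s+m}f$ the claim becomes the boundedness on $\Hp$ of the operator
\[
T := \lb D\rb^{s}\,a(x,D)\,\lb D\rb^{-s-m},
\]
uniformly in $p\in[1,\infty]$ and $s\in\R$.

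I would analyze $T$ through the standard symbolic calculus for $S^{\cdot}_{1,1/2}$, which remains valid since $\rho-\delta = 1-\tfrac{1}{2} > 0$. The Fourier multipliers $\lb D\rb^{s}$ and $\lb D\rb^{-s-m}$ have symbols in $S^{s}_{1,0}$ and $S^{-s-m}_{1,0}$ respectively, so the composition expansion gives $T = b(x,D) + R$, where $b$ is the sum of finitely many terms in the expansion, all contained in $S^{0}_{1,1/2}$, with principal part $\lb\eta\rb^{s} a(x,\eta) \lb\eta\rb^{-s-m}$, and the remainder $R$ has symbol in $S^{-N}_{1,1/2}$ for $N$ arbitrarily large. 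Each term in the expansion gains a factor of $\lb\eta\rb^{-1/2}$ over the previous one, so truncating at sufficiently high order produces a genuinely smoothing remainder.

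The cited result applies directly to give $b(x,D) \in \La(\Hp)$. For $R$, I would fix $N$ so large that $\lb D\rb^{N} R$ again has symbol in $S^{0}_{1,1/2}$, by the same composition calculus; the cited result then gives $\lb D\rb^{N} R \in \La(\Hp)$, which by the definition of $\HT^{N,p}_{FIO}(\Rn)$ means that $R$ sends $\Hp$ into $\HT^{N,p}_{FIO}(\Rn)\hookrightarrow \Hp$ boundedly. Combining these two estimates yields $T\in\La(\Hp)$, which completes the reduction.

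The main obstacle I anticipate is not conceptual but bookkeeping: at the critical exponent $\delta=1/2$ the gain per term in the expansion is only $\lb\eta\rb^{-1/2}$, so one must iterate further than in the usual subcritical setting and verify that the intermediate symbols remain in the $S^{\cdot}_{1,1/2}$ scale rather than drifting into $S^{\cdot}_{1,1}$, where the cited $\Hp$ bound would fail. As an alternative, Remark~\ref{rem:smooth} suggests mimicking the proof of Theorem~\ref{thm:critical} directly: apply the Littlewood--Paley characterization~\eqref{eq:HpFIOnorm}, Lemma~\ref{lem:multiplier}, and a dyadic symbol decomposition to estimate $\ph_{\w}(D)\lb D\rb^{s}a(x,D)f$ uniformly in $\w\in S^{n-1}$, bypassing symbolic calculus entirely.
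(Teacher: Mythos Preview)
Your proposal is correct and follows the same route as the paper's proof: reduce to showing that $\lb D\rb^{s}a(x,D)\lb D\rb^{-m-s}$ is bounded on $\Hp$, then invoke the cited $S^{0}_{1,1/2}$ result from~\cite{HaPoRo20}. The paper is simply more concise, observing directly (via~\cite[Proposition 0.3.C]{Taylor91}) that this composition already has symbol in $S^{0}_{1,1/2}$, so your split $T=b(x,D)+R$ and the worry about drift into $S^{\cdot}_{1,1}$ are unnecessary---composing with Fourier multipliers never raises~$\delta$.
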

\begin{proof}
In the case where $m=s=0$, the statement is contained in~\cite[Theorem 6.10]{HaPoRo20}. For general $m$ and $s$, it suffices to show that 
\[
\lb D\rb^{s}a(x,D)\lb D\rb^{-m-s}:\Hp\to\Hp
\]
is bounded. But $\lb D\rb^{s}a(x,D)\lb D\rb^{-m-s}$ has a symbol in $S^{0}_{1,1/2}$, cf.~\cite[Proposition 0.3.C]{Taylor91}, so the general case is reduced to what we have already shown.
\end{proof}

We now prove a proposition that yields, in particular, boundedness of pseudodifferential operators with $C^{r}_{*}S^{-\veps}_{1,\delta}$ symbols on $\Hps$, for any $r,\veps>0$ and $p$ close enough to $2$. Recall from \eqref{eq:sp} that, for $p\in[1,\infty]$, $s(p)=\frac{n-1}{2}|\frac{1}{2}-\frac{1}{p}|$.

\begin{proposition}\label{prop:pseudoloss}
Let $r>0$, $m\in\R$, $\delta\in[0,1]$, $p\in[1,\infty]$ and $a\in C^{r}_{*}S^{m}_{1,\delta}$. Then 
\begin{equation}\label{eq:pseudoHpFIO}
a(x,D):\HT^{s+2s(p)+m,p}_{FIO}(\Rn)\to\Hps
\end{equation}
for $-(1-\delta)r<s+s(p)<r$. If $\delta<1$ and $a\in \Hrinf S^{m}_{1,\delta}$, then \eqref{eq:pseudoHpFIO} also holds for $s+s(p)=r$. 
If additionally $a=b^{\flat}_{\delta}$ for some $b\in\Hrinf (\Rn)$, then \eqref{eq:pseudoHpFIO} also holds for $s+s(p)=-(1-\delta)r$ and $m=-\delta r$.
\end{proposition}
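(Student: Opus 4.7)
My plan is to derive Proposition \ref{prop:pseudoloss} by simply chaining the Sobolev embeddings \eqref{eq:sobolev} with the classical mapping result of Lemma \ref{lem:pseudoLp}. The key observation is that the $2s(p)$ loss in the exponent built into the statement is exactly the total loss incurred by passing from $\Hp$ into $\HT^{\,\cdot\,,p}$ on the input side and back from $\HT^{\,\cdot\,,p}$ into $\Hp$ on the output side.

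More precisely, setting $\alpha := s+2s(p)+m$, the upper Sobolev embedding in \eqref{eq:sobolev} gives
\[
\HT^{s+2s(p)+m,p}_{FIO}(\Rn) = \HT^{\alpha,p}_{FIO}(\Rn) \subseteq \HT^{\alpha-s(p),p}(\Rn) = \HT^{s+s(p)+m,p}(\Rn).
\]
Lemma \ref{lem:pseudoLp} then yields
\[
a(x,D)\colon \HT^{s+s(p)+m,p}(\Rn)\to \HT^{s+s(p),p}(\Rn),
\]
provided the target exponent $s+s(p)$ lies in the admissible range $(-(1-\delta)r,r)$, which is exactly the hypothesis. Finally, the lower Sobolev embedding in \eqref{eq:sobolev} gives $\HT^{s+s(p),p}(\Rn) \subseteq \Hps$. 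Composing these three arrows produces \eqref{eq:pseudoHpFIO}.

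For the two endpoint statements, I would apply the very same chain but invoke the corresponding stronger conclusions of Lemma \ref{lem:pseudoLp}. If $\delta<1$ and $a\in \Hrinf S^{m}_{1,\delta}$, then $s+s(p)=r$ is admissible in that lemma, so the middle arrow extends to the boundary, while the two Sobolev embeddings are available at every exponent and are insensitive to this. The same remark applies to the case $s+s(p)=-(1-\delta)r$ and $m=-\delta r$ with $a=b^{\flat}_{\delta}$: here Lemma \ref{lem:pseudoLp} still provides
\[
a(x,D)\colon \HT^{-r,p}(\Rn)\to \HT^{-(1-\delta)r,p}(\Rn),
\]
and chaining with the Sobolev embeddings, exactly as above, closes the argument.

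There is no real obstacle: the proposition is essentially a transcription of Lemma \ref{lem:pseudoLp} through the embeddings \eqref{eq:sobolev}, and the only thing to verify is that the admissibility intervals for $s+s(p)$ translate correctly from the classical setting to the $\Hp$ setting, which they do by construction. The more delicate result is the critical case $\delta=1/2$ where the $2s(p)$ loss is undesirable and where \eqref{eq:sobolev} is too lossy; this is precisely what Theorem \ref{thm:critical} is designed to overcome, but it is not needed here.
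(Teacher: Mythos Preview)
Your proof is correct and is essentially identical to the paper's own argument: the paper also simply composes the Sobolev embeddings \eqref{eq:sobolev} with Lemma \ref{lem:pseudoLp} to obtain the chain $\HT^{s+2s(p)+m,p}_{FIO}(\Rn)\subseteq \HT^{s+s(p)+m,p}(\Rn)\to \HT^{s+s(p),p}(\Rn)\subseteq \Hps$, and handles the endpoint cases by invoking the corresponding endpoint statements of Lemma \ref{lem:pseudoLp}.
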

\begin{proof}
The first statement follows by combining Lemma \ref{lem:pseudoLp} and the Sobolev embeddings for $\Hps$ from \eqref{eq:sobolev}:
\[
a(x,D):\HT^{s+2s(p)+m,p}_{FIO}(\Rn)\subseteq \HT^{s+s(p)+m,p}(\Rn)\to \HT^{s+s(p),p}(\Rn)\subseteq \Hps.
\]
The remaining statements follow in the same manner from Lemma \ref{lem:pseudoLp} and \eqref{eq:sobolev}.
\end{proof}

\begin{remark}\label{rem:doubleflat}
For the proof of Theorem \ref{thm:critical} it is convenient to note that \eqref{eq:pseudoHpFIO} also holds for $s+s(p)=-(1-\delta)r$ and $m=-\delta r$ when $a=((b^{\flat}_{\delta'})^{\flat}_{\delta'})^{\flat}_{\delta}$ for some $b\in\Hrinf (\Rn)$ if $\delta'\in[0,\delta]$. This follows either from a minor modification of the proof of Lemma \ref{lem:pseudoLp}, or by noting that 
\[
((b^{\flat}_{\delta'})^{\flat}_{\delta'})^{\flat}_{\delta}(x,D)-b^{\flat}_{\delta}(x,D):\HT^{t-\delta r,p}(\Rn)\to\HT^{t,p}(\Rn)
\]
is bounded for all $t\in\R$, as in the proof of Lemma \ref{lem:pseudoLp}.
\end{remark}

By combining Proposition \ref{prop:pseudoloss} with the symbol smoothing procedure from Section \ref{subsec:symbols}, we obtain the following corollary. Much of this result will in fact be improved in Corollary \ref{cor:combined}. However, the range of Sobolev indices in the latter result is smaller than what we will obtain here. Moreover, the proof of Corollary \ref{cor:combined}, via Theorem \ref{thm:critical}, is relatively involved, and we believe that there is value in demonstrating how a simpler proof yields a boundedness statement that already suffices for many purposes.

\begin{corollary}\label{cor:subcritical}
Let $r>0$, $m\in\R$, $\delta\in[0,1/2]$, $p\in[1,\infty]$ and $a\in C^{r}_{*}S^{m}_{1,\delta}$. Set $\sigma:=\max(0,2s(p)-(\frac{1}{2}-\delta)r)$. Then
\begin{equation}\label{eq:subcritical}
a(x,D):\HT^{s+m+\sigma,p}_{FIO}(\Rn)\to\Hps
\end{equation}
for $-r/2<s+s(p)<r$. If $a\in \Hrinf S^{m}_{1,\delta}$, then \eqref{eq:subcritical} also holds for $s+s(p)=r$. If $a\in \Hrinf (\Rn)$ and $p\in(1,\infty)$, then \eqref{eq:subcritical} holds for all $-r/2\leq s+s(p)\leq r$, with $m=\delta=0$.
\end{corollary}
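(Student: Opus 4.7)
The plan is to apply the symbol smoothing procedure of Lemma~\ref{lem:smoothing} with $\gamma=1/2$, writing $a=a^{\sharp}_{1/2}+a^{\flat}_{1/2}$, and to treat the two pieces separately using the tools already developed. By Lemma~\ref{lem:smoothing} we have $a^{\sharp}_{1/2}\in S^{m}_{1,1/2}$ and $a^{\flat}_{1/2}\in C^{r}_{*}S^{m-(1/2-\delta)r}_{1,1/2}$; moreover, if $a\in\Hrinf S^{m}_{1,\delta}$ then also $a^{\flat}_{1/2}\in \Hrinf S^{m-(1/2-\delta)r}_{1,1/2}$. Lemma~\ref{lem:smoothpseudo} then immediately gives
\[
a^{\sharp}_{1/2}(x,D):\HT^{s+m,p}_{FIO}(\Rn)\to\Hps
\]
for all $p\in[1,\infty]$ and $s\in\R$, while Proposition~\ref{prop:pseudoloss}, applied to $a^{\flat}_{1/2}$ with new roughness parameter $1/2$ and new order $m-(1/2-\delta)r$, yields
\[
a^{\flat}_{1/2}(x,D):\HT^{s+2s(p)+m-(1/2-\delta)r,p}_{FIO}(\Rn)\to\Hps
\]
on the open range $-r/2<s+s(p)<r$.

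Summing these two bounds requires the input Sobolev exponent to dominate both $s+m$ and $s+2s(p)+m-(1/2-\delta)r$. By monotonicity of the Sobolev scale over $\Hp$, the smallest such exponent is $s+m+\sigma$ with $\sigma=\max(0,2s(p)-(1/2-\delta)r)$, which is precisely the value in the statement. This establishes \eqref{eq:subcritical} on the open range $-r/2<s+s(p)<r$ for general $a\in C^{r}_{*}S^{m}_{1,\delta}$. The upper endpoint $s+s(p)=r$ under the assumption $a\in\Hrinf S^{m}_{1,\delta}$ is handled in the same way, since the improved conclusion $a^{\flat}_{1/2}\in \Hrinf S^{m-(1/2-\delta)r}_{1,1/2}$ permits Proposition~\ref{prop:pseudoloss} to be invoked at that endpoint, and the smooth-part bound is unaffected.

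For the last statement, with $a\in \Hrinf(\Rn)$ and $m=\delta=0$, the rough part of the smoothing of the symbol $(x,\eta)\mapsto a(x)$ coincides literally with $b^{\flat}_{1/2}$ for $b=a\in\Hrinf(\Rn)$ in the sense of the final clause of Proposition~\ref{prop:pseudoloss}, because the $x$-variable smoothing is applied only to $a$ and no $\eta$-dependence is present. Consequently that clause supplies the lower endpoint $s+s(p)=-r/2$, while the upper endpoint $s+s(p)=r$ was handled in the previous paragraph. The only obstacle is bookkeeping: matching up the various endpoint hypotheses on $p$ and $s$ across the invocations of Lemma~\ref{lem:smoothpseudo} and Proposition~\ref{prop:pseudoloss}, and verifying that the input Sobolev exponent in the statement is exactly the maximum forced by these two pieces. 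The restriction to $p\in(1,\infty)$ in this final case matches the exclusion of the delicate endpoints $p=1,\infty$, where the definition of $a(x,D)$ on distribution spaces becomes more subtle (cf.~Remark~\ref{rem:dualitydef}).
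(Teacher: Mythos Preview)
Your proof is correct and follows essentially the same route as the paper's: decompose $a=a^{\sharp}_{1/2}+a^{\flat}_{1/2}$ via Lemma~\ref{lem:smoothing}, apply Lemma~\ref{lem:smoothpseudo} to the smooth part and Proposition~\ref{prop:pseudoloss} to the rough part, and combine via the inclusion $\HT^{s+m+\sigma,p}_{FIO}(\Rn)\subseteq \HT^{s+m,p}_{FIO}(\Rn)\cap \HT^{s+2s(p)+m-(1/2-\delta)r,p}_{FIO}(\Rn)$; the endpoint cases are handled exactly as in the paper by invoking the corresponding endpoint clauses of Proposition~\ref{prop:pseudoloss}. One very minor remark: your explanation for the restriction $p\in(1,\infty)$ in the final statement (via Remark~\ref{rem:dualitydef}) is not quite the operative reason---the paper simply imposes this restriction without further comment, and the endpoint clause of Proposition~\ref{prop:pseudoloss} is in fact stated for all $p\in[1,\infty]$---but this does not affect the validity of your argument.
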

\begin{proof}
As in Lemma \ref{lem:smoothing}, we write $a=a^{\sharp}_{1/2}+a^{\flat}_{1/2}$ with $a^{\sharp}_{1/2}\in S^{m}_{1,1/2}$ and $a^{\flat}_{1/2}\in C^{r}_{*}S^{m-(1/2-\delta)r}_{1,1/2}$. Then Lemma \ref{lem:smoothpseudo} yields
\[
a^{\sharp}_{1/2}(x,D):\HT^{s+m+\sigma,p}_{FIO}(\Rn)\subseteq\HT^{s+m,p}_{FIO}(\Rn)\to\Hps
\]
for all $s\in\R$. On the other hand, Proposition \ref{prop:pseudoloss} shows that
\begin{equation}\label{eq:aflatmap}
a^{\flat}_{1/2}(x,D):\HT^{s+m+\sigma,p}_{FIO}(\Rn)\subseteq\HT^{s+2s(p)+m-(1/2-\delta)r,p}_{FIO}(\Rn)\to\Hps
\end{equation}
for $-r/2<s+s(p)<r$. This proves the first statement. For the second and third statement one only has to extend \eqref{eq:aflatmap} to the cases where $s+s(p)=r$ and $s+s(p)=-r/2$, respectively. But this follows from Proposition \ref{prop:pseudoloss} as well, noting that if $a\in \Hrinf S^{m}_{1,\delta}$ then $a^{\flat}_{1/2}\in \Hrinf S^{m-(1/2-\delta)r}_{1,1/2}$, by Lemma \ref{lem:smoothing}.
\end{proof}

\begin{remark}\label{rem:Sobolevrange}
For $a\in C^{r}_{*}S^{m}_{1,\delta}$ with $\delta<1/2$, one can obtain results for $s$ in a slightly larger range than that in Corollary \ref{cor:subcritical}. To this end one chooses $\delta'\in(\delta,1/2)$ and applies the symbol smoothing procedure to write $a=a^{\sharp}_{\delta'}+a^{\flat}_{\delta'}$ with $a^{\sharp}_{\delta'}\in S^{m}_{1,\delta'}$ and $a^{\flat}_{\delta'}\in C^{r}_{*}S^{m-(\delta'-\delta)r}_{1,\delta'}$. Then Lemma \ref{lem:smoothpseudo} and Proposition \ref{prop:pseudoloss} combine to yield
\begin{equation}\label{eq:Sobolevrange}
a(x,D):\HT^{s+m+\sigma',p}_{FIO}(\Rn)\to\Hps
\end{equation}
for $-(1-\delta')r<s+s(p)<r$ and $\sigma':=\max(0,2s(p)-(\delta'-\delta)r)$. Note that $\sigma'\geq\sigma$, so that \eqref{eq:subcritical} is stronger than \eqref{eq:Sobolevrange}. However, for $p$ sufficiently close to $2$ one has $\sigma=\sigma'=0$, and then \eqref{eq:Sobolevrange} is as strong as \eqref{eq:subcritical} and holds for a bigger range of $s$.
\end{remark}

\begin{remark}\label{rem:othercoef}
In Definition \ref{def:rough} one can consider symbols $a\in \HT^{r,q}S^{m}_{1,\delta}$ with coefficients in Sobolev spaces $\HT^{r,q}(\Rn)$ for $q<\infty$, by replacing $C^{r}_{*}(\Rn)$ in \eqref{eq:symbol2} by $\HT^{r,q}(\Rn)$. The Sobolev embedding $\HT^{r,q}(\Rn)\subseteq C^{r-n/q}_{*}(\Rn)$ allows one to directly derive mapping properties for pseudodifferential operators with $\HT^{r,q}S^{m}_{1,\delta}$ symbols from the results in this article. However, slightly better results can be obtained in an alternative manner. Mapping properties for pseudodifferential operators with $\HT^{r,q}S^{m}_{1,\delta}$ symbols on $\HT^{s,p}(\Rn)$ were obtained in~\cite{Marschall88}. As in Proposition \ref{prop:pseudoloss}, one can combine such results with the Sobolev embeddings for $\Hps$ in \eqref{eq:sobolev}. Moreover, the symbol decomposition in Lemma \ref{lem:smoothing} also applies to $\HT^{r,q}S^{m}_{1,\delta}$ symbols (see \cite[Proposition 1.8.2]{Taylor00}). Just as in Corollary \ref{cor:subcritical}, one can then derive mapping properties of operators with $\HT^{r,q}S^{m}_{1,\delta}$ symbols on $\Hps$. This applies in particular to multiplication operators associated with functions $g\in \HT^{r,q}(\Rn)$.
\end{remark}

\section{The critical case}\label{sec:critical}

In Corollary \ref{cor:subcritical} we proved boundedness of pseudodifferential operators with $C^{r}_{*}S^{0}_{1,\delta}$ symbols on $\Hps$ for any $r>0$ and $\delta<1/2$, at least for $p$ sufficiently close to $2$ and $s$ sufficiently close to $-s(p)$. However, for $\delta=1/2$, Corollary \ref{cor:subcritical} only yields boundedness for $p=2$. Lemma \ref{lem:smoothpseudo} shows that a pseudodifferential operator with an $S^{0}_{1,1/2}$ symbol is bounded on $\Hps$ for all $p\in[1,\infty]$ and $s\in\R$, but that leaves open the question whether operators with $C^{r}_{*}S^{0}_{1,1/2}$ symbols, for reasonably small $r$, are bounded on $\Hps$ for $p\neq 2$. In this section we show that this is indeed the case.

The main result of this section is as follows. It shows in particular that pseudodifferential operators with $C^{r}_{*}S^{0}_{1,1/2}$ symbols are bounded on $\Hps$ for all $1<p<\infty$ and $-r/2+2s(p)<s+s(p)<r$, as long as $r>n-1$. Theorem \ref{thm:introduction} is a special case of this.

\begin{theorem}\label{thm:critical}
Let $r>0$, $m\in\R$, $p\in(1,\infty)$ and $a\in C^{r}_{*}S^{m}_{1,1/2}$. For $\veps>0$, set
\[
\tau:=\begin{cases}
0&\text{if }r>n-1,\\
\veps&\text{if }r=n-1,\\
2s(p)\big(1-\frac{r}{n-1}\big)&\text{if }r<n-1,
\end{cases}
\]
and
\[
\gamma:=\begin{cases}
\frac{1}{2}+\frac{2s(p)}{r}&\text{if }r\geq n-1,\\
\frac{1}{2}+\frac{2s(p)}{n-1}&\text{if }r<n-1.
\end{cases}
\]
Then 
\begin{equation}\label{eq:critical}
a(x,D):\HT^{s+m+\tau,p}_{FIO}(\Rn)\to \Hps
\end{equation}
for $-(1-\gamma)r<s+s(p)<r$.
If $a\in\Hrinf S^{m}_{1,1/2}$, then \eqref{eq:critical} also holds for $s+s(p)=r$. If $a=b^{\flat}_{1/2}$ for some $b\in \Hrinf (\Rn)$, then \eqref{eq:critical} holds for all $-(1-\gamma)r\leq s+s(p)\leq r$, with $m=-r/2$.
\end{theorem}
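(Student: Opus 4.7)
The plan is to exploit the equivalent characterization of the $\Hp$-norm in \eqref{eq:HpFIOnorm} to reduce everything to Littlewood--Paley estimates of $\ph_{\w}(D)a(x,D)f$ in $L^{p}(\Rn)$ that are uniform in $\w\in S^{n-1}$. I would first split the symbol by the relative size of its $x$-frequency $\xi$ and its $\eta$-variable, writing $a=a_{\text{low}}+a_{\text{mid}}+a_{\text{high}}$ where $\F a_{\text{low}}(\cdot,\eta)(\xi)$ is supported in $|\xi|\lesssim \lb\eta\rb^{1/2}$, $\F a_{\text{high}}(\cdot,\eta)(\xi)$ in $|\xi|\gtrsim\lb\eta\rb^{\gamma}$, and $\F a_{\text{mid}}(\cdot,\eta)(\xi)$ in the intermediate range $\lb\eta\rb^{1/2}\lesssim|\xi|\lesssim\lb\eta\rb^{\gamma}$. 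The low piece is essentially $a^{\sharp}_{1/2}\in S^{m}_{1,1/2}$ from Lemma~\ref{lem:smoothing}, so Lemma~\ref{lem:smoothpseudo} handles it without any loss in $s$. The high piece, thanks to Lemma~\ref{lem:smoothing} again, belongs to $C^{r}_{*}S^{m-(\gamma-1/2)r}_{1,\gamma}$, and one can deploy Lemma~\ref{lem:pseudoLp} together with the Sobolev embeddings \eqref{eq:sobolev} so that the gain $(\gamma-1/2)r$ precisely cancels the loss $2s(p)$ from \eqref{eq:sobolev}: solving $(\gamma-1/2)r=2s(p)$ explains the exponent $\gamma$ in the statement when $r\ge n-1$.

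The heart of the argument is $a_{\text{mid}}$. Here I would apply a Coifman--Meyer type expansion to write $a_{\text{mid}}(x,\eta)=\sum_{k\ge0}a_{k}(x)\psi_{k}(\eta)$ (modulo tails that can be absorbed into the previous cases), where each $a_{k}\in C^{r}_{*}(\Rn)$ has $\F a_{k}$ supported where $|\xi|\eqsim 2^{k/2}$ and obeys $\|a_{k}\|_{L^{\infty}}\lesssim 2^{-kr/2}\lb 2^{k}\rb^{m+r/2}$. I would then insert a Littlewood--Paley decomposition of $f$ via $\psi_{\ell}(D)f$ and, because the relevant frequencies $\eta\in\supp\psi_{k}$ have $|\eta|\eqsim 2^{k}$, also decompose in the directions $\w$ by inserting $\ph_{\w}(D)$ through the resolution of the identity \eqref{eq:phiwintegral}. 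After these decompositions one is faced with expressions of the form $\ph_{\w}(D)\big[a_{k}\cdot(\psi_{\ell}(D)\ph_{\nu}(D)f)\big]$, and the support properties of $a_{k}$ together with those of $\ph_{\w}$ force $\w$ and $\nu$ to be at most $\lesssim 2^{-k/2}$ apart. The anisotropic multiplier theorem of Lemma~\ref{lem:multiplier} (or equivalently the Calder\'on--Zygmund description in Remark~\ref{rem:CalZyg}) allows me to remove $\ph_{\w}(D)$ uniformly in $\w$ at the cost of a factor $\lb D\rb^{(n-1)/4}$, and the Kato--Ponce/Marschall-style bilinear bounds handle the remaining commutator with the multiplication operator $a_{k}$, yielding a pointwise control by a Hardy--Littlewood maximal function in the directional variable.

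The main obstacle, and the source of the restriction $r>n-1$, lies in the final summation: one must integrate in $\w\in S^{n-1}$, and at frequency scale $2^{k}$ there are roughly $2^{k(n-1)/2}$ caps that intersect the support of a given $a_{k}$. Combining this cap count with the $L^{\infty}$ bound $\|a_{k}\|_{L^{\infty}}\lesssim 2^{-kr/2}$ produces a factor $2^{k(n-1)/2-kr/2}$ that is summable exactly when $r>n-1$; the critical case $r=n-1$ leaves a logarithm absorbed into an $\veps$, while $r<n-1$ forces one to trade the excess against the Sobolev embedding \eqref{eq:sobolev}, producing the stated loss $\tau=2s(p)(1-r/(n-1))$ and correspondingly modifying $\gamma$ to $1/2+2s(p)/(n-1)$. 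To reassemble the pieces I would apply Lemma~\ref{lem:LitPal}\eqref{it:LitPal1} to the Littlewood--Paley square function and then the Fefferman--Stein vector-valued maximal inequality on $L^{p}(S^{n-1};\ell^{2})$, which together confine the argument to $1<p<\infty$. The endpoint statements ($s+s(p)=r$ for $a\in\Hrinf S^{m}_{1,1/2}$, and $s+s(p)=-(1-\gamma)r$, $m=-r/2$ for $a=b^{\flat}_{1/2}$) are then obtained by the refinements of Lemma~\ref{lem:pseudoLp} and Remark~\ref{rem:doubleflat}, tracking the endpoint mapping properties of the paraproducts $R_{b}$ and $\pi_{b}$ through the decomposition above.
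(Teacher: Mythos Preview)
Your overall architecture matches the paper: the three-way split into low, high, and middle $x$-frequency pieces coincides with the decomposition $a=a^{\sharp}_{1/2}+(a^{\flat}_{1/2})^{\flat}_{\gamma}+(a^{\flat}_{1/2})^{\sharp}_{\gamma}$, handled respectively by Lemma~\ref{lem:smoothpseudo}, Proposition~\ref{prop:pseudoloss}, and a direct Littlewood--Paley argument on $\Hp$ combining Lemma~\ref{lem:multiplier} with the vector-valued maximal inequality on $L^{p}(S^{n-1};\ell^{2})$. The endpoint statements are indeed reached via the refinements in Lemma~\ref{lem:pseudoLp} and Remark~\ref{rem:doubleflat}.

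However, your treatment of the middle piece has a genuine gap. After the Coifman--Meyer expansion in $\eta$, each $a_{k}$ has $\F a_{k}$ supported in the \emph{range} $c2^{k/2}\lesssim|\xi|\lesssim 2^{k\gamma}$, not at the single scale $|\xi|\eqsim 2^{k/2}$, and one only has $\|a_{k}\|_{L^{\infty}}\lesssim 1$ (not $2^{-kr/2}$). The paper therefore performs a \emph{further} Littlewood--Paley decomposition in the $x$-variable, $a_{k}=\sum_{j\approx k/2}^{k\gamma}a_{k,j}$ with $a_{k,j}:=\psi_{j}(D)a_{k}$, and both key ingredients depend on $j$: the bound $\|a_{k,j}\|_{L^{\infty}}\lesssim 2^{(k/2-j)r}$, and the angular localization $|\nu-\w|\lesssim 2^{j-k}$ (not merely $2^{-k/2}$), so that the relevant cap $F_{\w,k,j}$ has measure $\eqsim 2^{(n-1)(j-k)}$. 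What must be controlled is the sum
\[
\sum_{j\approx k/2}^{k\gamma}2^{(k/2-j)r}\cdot 2^{(n-1)(j-k)}\cdot 2^{k(n-1)/2}\lesssim 2^{k\tau},
\]
the factor $2^{k(n-1)/2}$ coming from the two $(n-1)/4$ exponents in Lemma~\ref{lem:multiplier} and in $m\in S^{(n-1)/4}$. This $j$-sum is precisely what fixes both $\tau$ and $\gamma$: for $r>n-1$ the dominant term is $j\approx k/2$ and $\tau=0$; for $r<n-1$ the dominant term is $j\approx k\gamma$, which is why $\gamma$ is then capped at $1/2+2s(p)/(n-1)$ and the residual loss becomes $\tau=2s(p)(1-r/(n-1))$. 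Your single-scale cap-count heuristic together with the incorrect bound $\|a_{k}\|_{L^{\infty}}\lesssim 2^{-kr/2}$ does not reproduce this mechanism and would not yield the stated $\tau$ for $r<n-1$. Finally, no Kato--Ponce or commutator estimate is needed: once $\lb D\rb^{-(n-1)/4}\ph_{\w}(D)$ is removed on $L^{p}(\Rn;\ell^{2})$ via Lemma~\ref{lem:multiplier}, one simply uses $|a_{k,j}g|\leq\|a_{k,j}\|_{L^{\infty}}|g|$ pointwise.
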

\begin{proof}
By replacing $a(x,D)$ by $a(x,D)\lb D\rb^{-m}$, we may assume that $m=0$. For the final statement this requires replacing $a(x,D)$ by $b^{\flat}_{1/2}(x,D)\lb D\rb^{r/2}$.

\subsubsection{Symbol smoothing}

We first use the symbol smoothing procedure and the results from the previous section to remove some of the frequencies of $a$. More precisely, we claim that it suffices to prove the following statement. If $a\in C^{r}_{*}S^{0}_{1,1/2}$ has the additional property that, for some $c>0$ and all $\eta\in\Rn$, one has\begin{equation}\label{eq:supportcondition}
\supp(\F a(\cdot,\eta))\subseteq\{\xi\in\Rn\mid c|\eta|^{1/2}\leq |\xi|\leq  \tfrac{1}{16}(1+|\eta|)^{\gamma}\},
\end{equation}
then \eqref{eq:critical} holds for all $s\in\R$.

To prove this claim, let a general $a\in C^{r}_{*}S^{0}_{1,1/2}$ be given and note that
\[
\tfrac{1}{2}\leq \gamma\leq \tfrac{1}{2}+\tfrac{2s(p)}{n-1} <\tfrac{1}{2}+\tfrac{n-1}{2(n-1)}=1
\]
and
\[
\tau\geq 2s(p)-(\gamma-\tfrac{1}{2})r.
\]
We apply the symbol smoothing procedure in Lemma \ref{lem:smoothing} twice, once to $a$ and once to $a^{\flat}_{1/2}$, to write
\begin{equation}\label{eq:symboldecomp}
a=a^{\sharp}_{1/2}+a^{\flat}_{1/2}=a^{\sharp}_{1/2}+(a^{\flat}_{1/2})^{\sharp}_{\gamma}+(a^{\flat}_{1/2})^{\flat}_{\gamma}.
\end{equation}
Then $a^{\sharp}_{1/2}\in S^{0}_{1,1/2}$ and $(a^{\flat}_{1/2})^{\flat}_{\gamma}\in C^{r}_{*}S^{-(\gamma-1/2)r}_{1,\gamma}$, by Lemma \ref{lem:smoothing}. Hence 
\begin{equation}\label{eq:asharpbound}
a^{\sharp}_{1/2}(x,D):\HT^{s+\tau,p}_{FIO}(\Rn)\subseteq\Hps\to\Hps
\end{equation}
for all $s\in\R$, by Lemma \ref{lem:smoothpseudo}, and
\begin{equation}\label{eq:doubleflat}
(a^{\flat}_{1/2})^{\flat}_{\gamma}(x,D):\HT^{s+\tau,p}_{FIO}(\Rn)\subseteq\HT^{s+2s(p)-(\gamma-1/2)r,p}_{FIO}(\Rn)\to\Hps
\end{equation}
for $-(1-\gamma)r<s+s(p)<r$, by Proposition \ref{prop:pseudoloss}. Moreover, if $a\in\Hrinf S^{0}_{1,1/2}$ then $(a^{\flat}_{1/2})^{\flat}_{\gamma}\in \Hrinf S^{-(\gamma-1/2)r}_{1,\gamma}$, again by Lemma \ref{lem:smoothing}, and then Proposition \ref{prop:pseudoloss} shows that \eqref{eq:doubleflat} also holds for $s+s(p)=r$. Finally, if $a(x,D)=b_{1/2}^{\flat}(x,D)\lb D\rb^{r/2}$ for some $b\in \Hrinf (\Rn)$, then Remark \ref{rem:doubleflat} shows that \eqref{eq:doubleflat} also holds for $s+s(p)=-(1-\gamma)r$.

It thus follows from \eqref{eq:symboldecomp}, \eqref{eq:asharpbound} and \eqref{eq:doubleflat} that it suffices to prove \eqref{eq:critical} for all $s\in\R$ with $a(x,D)$ replaced by $(a^{\flat}_{1/2})^{\sharp}_{\gamma}(x,D)$. It is straightforward to check that, for $\ph$ supported sufficiently close to zero, $(a^{\flat}_{1/2})^{\sharp}_{\gamma}$ has the property in \eqref{eq:supportcondition} for some $c>0$ and all $\eta\in\Rn$:
\[
\supp\big(\F \big((a^{\flat}_{1/2})^{\sharp}_{\gamma}(\cdot,\eta)\big)\big)\subseteq\{\xi\in\Rn\mid c|\eta|^{1/2}\leq |\xi|\leq  \tfrac{1}{16}(1+|\eta|)^{\gamma}\}.
\]
Moreover, $a^{\flat}_{1/2}\in C^{r}_{*}S^{0}_{1,1/2}$ and $(a^{\flat}_{1/2})^{\flat}_{\gamma}\in C^{r}_{*}S^{-(\gamma-1/2)r}_{1,\gamma}\subseteq C^{r}_{*}S^{0}_{1,1/2}$, so $(a^{\flat}_{1/2})^{\sharp}_{\gamma}\in C^{r}_{*}S^{0}_{1,1/2}$ as well, by \eqref{eq:symboldecomp}. This proves the claim, and the remainder of the proof is dedicated to showing \eqref{eq:critical} for all $s\in\R$ and $a\in C^{r}_{*}S^{0}_{1,1/2}$ satisfying \eqref{eq:supportcondition}.

\subsubsection{A symbol decomposition} 

Next, we will use a symbol decomposition as in~\cite[Chapter 2]{Coifman-Meyer78} (see also~\cite{Bourdaud82,Marschall88,Taylor00}). Let $(\psi_{k})_{k=0}^{\infty}$ be the Littlewood-Paley decomposition from \eqref{eq:LitPaldecomp}, and suppose that $\psi_{1}(\eta)=0$ for $|\eta|\notin [(1+\veps)/2,2-\veps]$ for some $\veps>0$. Let $(\wt{\psi}_{k})_{k=0}^{\infty}\subseteq C^{\infty}_{c}(\Rn)$ be such that $\wt{\psi}_{k}\equiv 1$ on $\supp(\psi_{k})$ for each $k\geq0$, $\wt{\psi}_{0}(\eta)=0$ for $|\eta|>2$, and such that for all $k\geq1$ and $\eta\in\Rn$ one has $\wt{\psi}_{1}(\eta)=0$ if $|\eta|\notin [1/2,2]$, and $\wt{\psi}_{k}(\eta)=\wt{\psi}_{1}(2^{-k+1}\eta)$. By decomposing the function 
\[
\zeta\mapsto a(x,2^{k+1}\pi\zeta)\psi_{k}(2^{k+1}\pi\zeta)
\]
on $[-1/2,1/2]^{n}$ into its Fourier modes, we can write
\[
a(x,\eta)\psi_{k}(\eta)=\sum_{\beta\in\Z^{n}}c_{k,\beta}(x)e^{i\beta2^{-k}\eta}\wt{\psi}_{k}(\eta)
\]
for all $k\geq0$ and $x,\eta\in\Rn$, where
\begin{equation}\label{eq:defck}
c_{k,\beta}(x)=\int_{[-1/2,1/2]^{n}}e^{-2\pi i \beta\zeta}a(x,2^{k+1}\pi\zeta)\psi_{k}(2^{k+1}\pi\zeta)\ud\zeta.
\end{equation}
Then 
\begin{equation}\label{eq:Fourdecomp}
a(x,\eta)=\sum_{k=0}^{\infty}a(x,\eta)\psi_{k}(\eta)=\sum_{\beta\in\Z^{n}}\sum_{k=0}^{\infty}c_{k,\beta}(x)e^{i\beta2^{-k}\eta}\wt{\psi}_{k}(\eta).
\end{equation}
Here the $c_{k,\beta}$ decay rapidly as $|\beta|\to\infty$, as can be seen by integrating by parts in \eqref{eq:defck} and using that $a\in C^{r}_{*}S^{0}_{1,1/2}$ and that $\psi_{k}(\eta)=\psi_{1}(2^{-k+1}\eta)=0$ if $|\eta|\notin[2^{k-2},2^{k}]$ and $k\geq1$. Moreover, for each $k\geq0 $ one has $\F c_{k,\beta}(\xi)=0$ if $|\xi|<c2^{(k-2)/2}$ or $|\xi|>2^{k\gamma-3}\geq \tfrac{1}{16}(1+2^{k})^{\gamma}$, as follows from assumption \eqref{eq:supportcondition} and the support condition on $\psi_{k}$. 

By inspecting the properties of each of the series in \eqref{eq:Fourdecomp} for fixed $\beta\in\Z_{+}^{n}$, using in particular the rapid decay of the $c_{k,\beta}$ and that $a\in C^{r}_{*}S^{0}_{1,1/2}$, it follows that we may suppose in the remainder that $a$ has the form
\begin{equation}\label{eq:anew}
a(x,\eta)=\sum_{k=0}^{\infty}a_{k}(x)\chi_{k}(\eta)\quad(x,\eta\in \Rn),
\end{equation}
where $(a_{k})_{k=0}^{\infty}\subseteq C^{r}_{*}(\Rn)$ satisfies
\begin{equation}\label{eq:abounds}
\sup_{k\geq0}\|a_{k}\|_{L^{\infty}(\Rn)}+2^{-kr/2}\|a_{k}\|_{C^{r}_{*}(\Rn)}<\infty
\end{equation}
and, for all $k\geq0$ and $\eta\in\Rn$,
\begin{equation}\label{eq:supportak}
\supp(\F a_{k})\subseteq\{\xi\in\Rn\mid c2^{(k-2)/2}\leq |\xi|\leq  2^{k\gamma-3}\}.
\end{equation}
The latter assumption implies that in fact $(a_{k})_{k=0}^{\infty}\subseteq C^{\infty}(\Rn)$. Moreover, $(\chi_{k})_{k=0}^{\infty}\subseteq C^{\infty}_{c}(\Rn)$ is such that $\chi_{0}(\eta)=0$ if $|\eta|>1$, $\chi_{1}(\eta)=0$ if $|\eta|\notin [1/2,2]$, and $\chi_{k}(\eta)=\chi_{1}(2^{-k+1}\eta)$ for all $k>1$ and $\eta\in\Rn$. 

\subsubsection{The low-frequency components} 

After this preliminary work, we now get to the heart of the proof. Let $f\in\Sw(\Rn)$. For $a$ given by \eqref{eq:anew}, by \eqref{eq:HpFIOnorm} we need to show that
\begin{equation}\label{eq:newertoprove}
\|q(D)a(x,D)f\|_{L^{p}}+\Big(\int_{S^{n-1}}\|\ph_{\w}(D)a(x,D)f\|_{\HT^{s,p}}^{p}\ud\w\Big)^{1/p}\lesssim \|f\|_{\HT^{s+\tau,p}_{FIO}}
\end{equation}
for an implicit constant independent of $f$. Throughout, for $k\geq0$ write $f_{k}:=\chi_{k}(D)f$.

We first deal with the low-frequency component $q(D)a(x,D)f$. Here one can directly combine the smoothing property of $q(D)$ with Lemma \ref{lem:pseudoLp} and the Sobolev embeddings in \eqref{eq:sobolev}, but instead we make an observation which will also be useful later on. For each $k\geq1$ one has
\begin{equation}\label{eq:vanish}
\F(a_{k}f_{k})(\xi)=0\quad (|\xi|\notin [2^{k-3},2^{k+1}]),
\end{equation} 
as follows from the support properties of $\F(a_{k})$ and $\F(f_{k})=\chi_{k}\F(f)$. Indeed,
\begin{equation}\label{eq:conv}
\supp(\F(a_{k}f_{k}))=\supp(\F(a_{k})\ast \F(f_{k}))\subseteq\supp(\F(a_{k}))+\supp(\F(f_{k})),
\end{equation}
and for $\zeta\in \supp(\F(a_{k}))$ and $\eta\in\supp(\F(f_{k}))$ one has $|\zeta|\leq 2^{k\gamma-3}\leq 2^{k-3}$, $|\eta|\in[2^{k-2},2^{k}]$ and 
\begin{equation}\label{eq:sumbound}
2^{k-3}=2^{k-2}-2^{k-3}\leq |\zeta+\eta|\leq 2^{k}+2^{k-3}\leq 2^{k+1}.
\end{equation}
Since $q(\xi)=0$ for $|\xi|\geq2$, it follows from \eqref{eq:vanish} that
\[
q(D)a(x,D)f=\sum_{k=0}^{3}q(D)(a_{k}f_{k}).
\]
Now the fact that $q\in C^{\infty}_{c}(\Rn)$, the assumption that $(a_{k})_{k=0}^{\infty}\subseteq C^{\infty}(\Rn)$ and $(\chi_{k})_{k=0}^{\infty}\subseteq C^{\infty}_{c}(\Rn)$, and the Sobolev embeddings from \eqref{eq:sobolev} yield
\begin{align*}
\|q(D)a(x,D)f\|_{L^{p}}&\leq\sum_{k=0}^{3} \|q(D)(a_{k}f_{k})\|_{L^{p}}\lesssim \sum_{k=0}^{3} \|a_{k}f_{k}\|_{\HT^{s+\tau-s(p),p}}\\
&\lesssim \sum_{k=0}^{3} \|\chi_{k}(D)f\|_{\HT^{s+\tau-s(p),p}}\lesssim \|f\|_{\HT^{s+\tau-s(p),p}}\lesssim \|f\|_{\HT^{s+\tau,p}_{FIO}}.
\end{align*}
This proves half of \eqref{eq:newertoprove}.

For the other half of \eqref{eq:newertoprove} first note that, since $q\in C^{\infty}_{c}(\Rn)$, there exists an $N\geq0$ such that $a(x,D)q(D)f=\sum_{k=0}^{N}a_{k}\chi_{k}(D)q(D)f$. By considering functions with a single nonzero coordinate, it follows in particular from Lemma \ref{lem:multiplier} that $\ph_{\w}(D):\HT^{s+\frac{n-1}{4},p}(\Rn)\to\HT^{s,p}(\Rn)$ uniformly in $\w\in S^{n-1}$. Hence we obtain in the same manner as before that
\begin{align*}
&\|\ph_{\w}(D)a(x,D)q(D)f\|_{\HT^{s,p}}\leq\sum_{k=0}^{N} \|\ph_{w}(D)(a_{k}q(D)\chi_{k}(D)f)\|_{\HT^{s,p}}\\
&\lesssim \sum_{k=0}^{N} \|a_{k}q(D)\chi_{k}(D)f\|_{\HT^{s+(n-1)/4,p}}\lesssim \sum_{k=0}^{N} \|q(D)\chi_{k}(D)f\|_{\HT^{s+(n-1)/4,p}}\\
&\lesssim \sum_{k=0}^{N} \|f\|_{\HT^{s+\tau-s(p),p}}\lesssim \|f\|_{\HT^{s+\tau-s(p),p}}\lesssim \|f\|_{\HT^{s+\tau,p}_{FIO}},
\end{align*}
for implicit constants independent of $\w$ and $f$. Hence
\[
\Big(\int_{S^{n-1}}\|\ph_{\w}(D)a(x,D)q(D)f\|_{\HT^{s,p}}^{p}\ud\w\Big)^{1/p}\lesssim \|f\|_{\HT^{s+\tau,p}_{FIO}},
\]
and for \eqref{eq:newertoprove} it remains to show that 
\[
\Big(\int_{S^{n-1}}\|\ph_{\w}(D)a(x,D)(1-q)(D)f\|_{\HT^{s,p}}^{p}\ud\w\Big)^{1/p}\lesssim \|f\|_{\HT^{s+\tau,p}_{FIO}}.
\]
Moreover, since $q(D):\HT^{s+\tau,p}_{FIO}(\Rn)\to\HT^{s+\tau,p}_{FIO}(\Rn)$ is bounded, as follows e.g.~from \eqref{eq:sobolev} or Lemma \ref{lem:smoothpseudo}, it suffices to show that
\[
\Big(\int_{S^{n-1}}\|\ph_{\w}(D)a(x,D)(1-q)(D)f\|_{\HT^{s,p}}^{p}\ud\w\Big)^{1/p}\lesssim \|(1-q)(D)f\|_{\HT^{s+\tau,p}_{FIO}}.
\]
This is what we will focus on in the remainder of the proof. However, for simplicity of notation and because the only property that we will use of $(1-q)(D)f$ is that $\chi_{0}(D)(1-q)(D)f=0$, we will continue working with $f$ and prove more generally that
\begin{equation}\label{eq:newesttoprove}
\Big(\int_{S^{n-1}}\|\ph_{\w}(D)a(x,D)f\|_{\HT^{s,p}}^{p}\ud\w\Big)^{1/p}\lesssim \|f\|_{\HT^{s+\tau,p}_{FIO}}
\end{equation}
under the additional assumption that $\chi_{0}(D)f=0$.

\subsubsection{The high-frequency component} 

In the remainder of the proof we will show \eqref{eq:newesttoprove} under the additional assumption that $f_{0}=\chi_{0}(D)f=0$. To this end, write $a_{k,j}:=\psi_{j}(D)a_{k}$ for $k,j\geq0$, and $a_{k,j}:=0$ for $j<0$. Note that, by \eqref{eq:supportak}, there exists an $M\geq0$ such that
\[
a(x,D)f=\sum_{k=1}^{\infty}\sum_{j=-M+k/2}^{k\gamma-2}a_{k,j}f_{k}.
\]
For convenience, we assume that $M\geq3$. Let $\w\in S^{n-1}$. Then Lemma \ref{lem:LitPal} and \eqref{eq:vanish} yield
\begin{equation}\label{eq:LitPaluse}
\begin{aligned}
&\|\ph_{\w}(D)a(x,D)f\|_{\HT^{s,p}}\\
&=\Big\|\sum_{k=1}^{\infty}\lb D\rb^{-\frac{n-1}{4}}\ph_{\w}(D)\Big(\sum_{j=-M+k/2}^{k\gamma-2}a_{k,j}f_{k}\Big)\Big\|_{\HT^{s+(n-1)/4,p}}\\
&\lesssim \Big\|\Big(\sum_{k=1}^{\infty}4^{k(s+\frac{n-1}{4})}\Big|\lb D\rb^{-\frac{n-1}{4}}\ph_{\w}(D)\Big(\sum_{j=-M+k/2}^{k\gamma-2}a_{k,j}f_{k}\Big)\Big|^{2}\Big)^{1/2}\Big\|_{L^{p}}
\end{aligned}
\end{equation}
for an implicit constant independent of $\w$. Next, as in \eqref{eq:phiwintegral}, let $m\in S^{(n-1)/4}(\Rn)$ be such that $f_{k}=\int_{S^{n-1}}m(D)\ph_{\nu}(D)f_{k}\,\ud\nu$ for all $k\geq1$, which is possible since $\wh{f}_{k}(\zeta)=0$ for $|\zeta|<1/2$. Write $f_{k,\nu}:=m(D)\ph_{\nu}(D)f_{k}$ for $\nu\in S^{n-1}$. We claim that 
\begin{equation}\label{eq:restrict}
\ph_{\w}(D)\Big(\sum_{j=-M+k/2}^{k\gamma-2}a_{k,j}f_{k}\Big)=\ph_{\w}(D)\Big(\sum_{j=-M+k/2}^{k\gamma-2}a_{k,j}\int_{F_{\w,k,j}}f_{k,\nu}\,\ud\nu\Big),
\end{equation}
where 
\begin{equation}\label{eq:defF}
F_{\w,k,j}:=\{\nu \in S^{n-1}\mid |\nu-\w|\leq 2^{3+M+j-k}\}
\end{equation}
for $-M+k/2\leq j\leq k\gamma-2$. Note that $|F_{\w,k,j}|\eqsim 2^{(n-1)(j-k)}$.

To prove \eqref{eq:restrict}, let $k\geq1$, $-M+k/2\leq j\leq k\gamma-2$, $\nu\in S^{n-1}$, $\zeta\in \supp(\F(a_{k,j}))$ and $\eta\in\supp(\F(f_{k,\nu}))$ be given. Then $|\hat{\eta}-\nu|\leq 2^{1-k/2}$ and, using \eqref{eq:sumbound},
\begin{equation}\label{eq:fracbound}
\frac{\big||\zeta+\eta|-|\eta|\big|}{|\zeta+\eta|}\leq \frac{|\zeta|}{|\zeta+\eta|}\leq \frac{2^{j}}{2^{k-3}}.
\end{equation}
Hence
\[
\Big|\frac{\eta}{|\zeta+\eta|}-\nu\Big|\leq |\hat{\eta}-\nu|+\Big|\frac{\eta}{|\zeta+\eta|}-\frac{\eta}{|\eta|}\Big|\leq 2^{1-k/2}+\frac{\big||\zeta+\eta|-|\eta|\big|}{|\zeta+\eta|}\leq 2^{1-k/2}+2^{3+j-k}
\]
and, using \eqref{eq:fracbound} again,
\begin{align*}
\Big|\frac{\zeta+\eta}{|\zeta+\eta|}-\nu\Big|&\leq \Big|\frac{\eta}{|\zeta+\eta|}-\nu\Big|+\frac{|\zeta|}{|\zeta+\eta|}\leq 2^{1-k/2}+2^{4+j-k}=2^{1-k+k/2}+2^{4+j-k}\\
&\leq 2^{1+M+j-k}+2^{4+j-k}\leq 2^{2+M+j-k},
\end{align*}
where we also used that $M\geq3$. If $\nu\notin F_{\w,k,j}$ then this in turn implies that 
\begin{align*}
\Big|\frac{\zeta+\eta}{|\zeta+\eta|}-\w\Big|&\geq |\nu-\w|-\Big|\frac{\zeta+\eta}{|\zeta+\eta|}-\nu\Big|>2^{3+M+j-k}-2^{2+M+j-k}\\
&=2^{2+M+j-k}\geq 2^{2-k+k/2}\geq 2^{1-k/2}\geq |\zeta+\eta|^{-1/2},
\end{align*}
so that $\zeta+\eta\notin \supp(\ph_{\w})$, by \eqref{eq:phiwsupport}. By \eqref{eq:conv} with $a_{k}$ replaced by $a_{k,j}$ and $f_{k}$ replaced by $f_{k,\nu}$, this proves \eqref{eq:restrict}.

Now we can combine \eqref{eq:LitPaluse}, \eqref{eq:restrict} and Lemma \ref{lem:multiplier} to write
\begin{align*}
&\|\ph_{\w}(D)a(x,D)f\|_{\HT^{s,p}}\\
&\lesssim \Big\|\Big(\sum_{k=1}^{\infty}4^{k(s+\frac{n-1}{4})}\Big|\lb D\rb^{-\frac{n-1}{4}}\ph_{\w}(D)\Big(\sum_{j=-M+k/2}^{k\gamma-2}a_{k,j}f_{k}\Big)\Big|^{2}\Big)^{1/2}\Big\|_{L^{p}}\\
&=\Big\|\Big(\sum_{k=1}^{\infty}4^{k(s+\frac{n-1}{4})}\Big|\lb D\rb^{-\frac{n-1}{4}}\ph_{\w}(D)\Big(\sum_{j=-M+k/2}^{k\gamma-2}a_{k,j}\int_{F_{\w,k,j}}f_{k,\nu}\,\ud\nu\Big)\Big|^{2}\Big)^{1/2}\Big\|_{L^{p}}\\
&\lesssim \Big\|\Big(\sum_{k=1}^{\infty}4^{k(s+\frac{n-1}{4})}\Big|\sum_{j=-M+k/2}^{k\gamma-2}a_{k,j}\int_{F_{\w,k,j}}f_{k,\nu}\,\ud\nu\Big|^{2}\Big)^{1/2}\Big\|_{L^{p}}\\
&\lesssim \Big\|\Big(\sum_{k=1}^{\infty}4^{k(s+\frac{n-1}{4})}\Big(\sum_{j=-M+k/2}^{k\gamma-2}|a_{k,j}|2^{(n-1)(j-k)}\fint_{F_{\w,k,j}}|f_{k,\nu}|\ud\nu\Big)^{2}\Big)^{1/2}\Big\|_{L^{p}}.
\end{align*}
Next, by \eqref{eq:abounds}, one has
\[
\|a_{k,j}\|_{L^{\infty}}=\|\psi_{j}(D)a_{k}\|_{L^{\infty}}\leq 2^{-jr}\|a_{k}\|_{C^{r}_{*}}\lesssim 2^{(\frac{k}{2}-j)r}
\]
for all $k,j\geq0$. Hence 
\begin{align*}
&\|\ph_{\w}(D)a(x,D)f\|_{\HT^{s,p}}\\
&\lesssim \Big\|\Big(\sum_{k=1}^{\infty}4^{k(s+\frac{n-1}{4})}\Big(\sum_{j=-M+k/2}^{k\gamma-2}2^{k(\frac{r}{2}+1-n)+j(n-1-r)}\fint_{F_{\w,k,j}}|f_{k,\nu}|\ud\nu\Big)^{2}\Big)^{1/2}\Big\|_{L^{p}}\\
&=\Big\|\Big(\sum_{k=1}^{\infty}\Big(\sum_{j=-M+k/2}^{k\gamma-2}2^{k(\frac{r}{2}-\tau-\frac{n-1}{2})+j(n-1-r)}\fint_{F_{\w,k,j}}2^{k(s+\tau-\frac{n-1}{4})}|f_{k,\nu}|\ud\nu\Big)^{2}\Big)^{1/2}\Big\|_{L^{p}}.
\end{align*}
Let $M$ be the centered Hardy--Littlewood maximal function on $S^{n-1}$, given for $g\in L^{1}(S^{n-1})$ by
\[
Mg(\w):=\sup_{B}\fint_{B}|g(\nu)|\ud \nu\quad(\w\in S^{n-1}),
\]
where the supremum is taken over all balls $B\subseteq S^{n-1}$ with center $\w$. Also, for $x\in\Rn$ and $k\geq1$ set 
\[
g_{k,x}(\nu):=2^{k(s+\tau-\frac{n-1}{4})}f_{k,\nu}(x)\quad(\nu\in S^{n-1}).
\]
Now, $\tau$ and $\gamma$ are chosen so that 
\[
\sum_{j=-M+k/2}^{k\gamma-2}2^{k(\frac{r}{2}-\tau-\frac{n-1}{2})+j(n-1-r)}\lesssim 1,
\]
as is straightforward to check. We can combine all this with the boundedness of $M$ on $L^{p}(S^{n-1};\ell^{2})$ (see~\cite[Section 6.6]{GeGoKoKr98}) to obtain
\begin{align*}
&\int_{S^{n-1}}\|\ph_{\w}(D)a(x,D)f\|_{\HT^{s,p}}^{p}\ud \w\\
&\lesssim \int_{S^{n-1}}\int_{\Rn}\!\Big(\sum_{k=1}^{\infty}\Big(\sum_{j=-M+k/2}^{k\gamma-2}2^{k(\frac{r}{2}-\tau-\frac{n-1}{2})+j(n-1-r)}\fint_{F_{\w,k,j}}|g_{k,x}(\nu)|\ud\nu\Big)^{2}\Big)^{p/2}\ud x\ud \w\\
&\leq \int_{S^{n-1}}\int_{\Rn}\!\Big(\sum_{k=1}^{\infty}\Big(\sum_{j=-M+k/2}^{k\gamma-2}2^{k(\frac{r}{2}-\tau-\frac{n-1}{2})+j(n-1-r)}Mg_{k,x}(\w)\Big)^{2}\Big)^{p/2}\ud x\ud \w\\
&\lesssim \int_{\Rn}\int_{S^{n-1}}\!\Big(\sum_{k=1}^{\infty}|Mg_{k,x}(\w)|^{2}\Big)^{p/2}\ud \w\ud x\lesssim \int_{\Rn}\int_{S^{n-1}}\!\Big(\sum_{k=1}^{\infty}|g_{k,x}(\nu)|^{2}\Big)^{p/2}\ud\nu \ud x\\
&=\int_{S^{n-1}}\Big\|\Big(\sum_{k=1}^{\infty}4^{k(s+\tau-\frac{n-1}{4})}|\chi_{k}(D)m(D)\ph_{\nu}(D)f|^{2}\Big)^{1/2}\Big\|_{L^{p}}^{p}\ud\nu.
\end{align*}
Now we apply Lemma \ref{lem:LitPal} to this expression, and use that $m\in S^{(n-1)/4}(\Rn)$ and that $\sum_{k=0}^{\infty}\chi_{k}\in S^{0}(\Rn)$, as is straightforward to check, to obtain 
\begin{align*}
&\int_{S^{n-1}}\|\ph_{\w}(D)a(x,D)f\|_{\HT^{s,p}}^{p}\ud \w\\
&\lesssim \int_{S^{n-1}}\Big\|\sum_{k=1}^{\infty}m(D)\chi_{k}(D)\ph_{\nu}(D)f\Big\|_{\HT^{s+\tau-(n-1)/4,p}}^{p}\ud\nu\\
&\lesssim \int_{S^{n-1}}\|\ph_{\nu}(D)f\|_{\HT^{s+\tau,p}}^{p}\ud\nu\lesssim \|f\|_{\HT^{s+\tau,p}_{FIO}}^{p}.
\end{align*}
This proves the second half of \eqref{eq:newertoprove} and concludes the proof.
\end{proof}

\begin{remark}\label{rem:dependence}
It follows from the proof of Theorem \ref{thm:critical} that the operator norm of $a(x,D)$ depends on $a$ through a finite number of the $C^{r}_{*}S^{m}_{1,\delta}$ seminorms of $a$. More precisely, for all $p\in(1,\infty)$ and $-(1-\gamma)r<s+s(p)<r$ one has 
\[
\|a(x,D)\|_{\La(\HT^{s+m+\tau,p}_{FIO}(\Rn),\Hps)}\lesssim \max_{|\alpha|\leq N}C_{\alpha},
\]
with an implicit constant independent of $a\in C^{r}_{*}S^{m}_{1,\delta}$. Here $C_{\alpha}$, for $\alpha\in\Z_{+}^{n}$, is such that \eqref{eq:symbol1} and \eqref{eq:symbol2} hold. Similar statements apply for the extremal values $s+s(p)=-(1-\gamma)r$ and $s+s(p)=r$ in Theorem \ref{thm:critical}, as well as to other results in this article such as Proposition \ref{prop:pseudoloss} and Corollary \ref{cor:subcritical}. 
\end{remark}

\begin{remark}\label{rem:p1}
For $p=1$ one can still rely on Littlewood-Paley theory as in Lemma \ref{lem:LitPal}. However, there are two obstacles in extending our proof of Theorem \ref{thm:critical} to the case where $p=1$. Firstly, Lemma \ref{lem:multiplier} played a crucial role in the proof, and it is unclear to the author whether that lemma extends (in some form) to the case where $p=1$. Secondly, we used the assumption that $1<p<\infty$ when relying on the boundedness of the Hardy--Littlewood maximal function on $L^{p}(S^{n-1};\ell^{2})$. We choose to not pursue this matter any further in the present article.
\end{remark}

\begin{remark}\label{rem:smooth}
Theorem \ref{thm:critical} recovers the statement of Lemma \ref{lem:smoothpseudo}. One might note that we in fact used Lemma \ref{lem:smoothpseudo} in the proof of Theorem \ref{thm:critical} to deal with the low frequencies of $a$, captured by $a^{\sharp}_{1/2}$. However, it is not too difficult to adapt the proof of Theorem \ref{thm:critical} to also deal with $a^{\sharp}_{1/2}$ directly. This involves a modification of the definition of $F_{\w,k,j}$ in \eqref{eq:defF}, by setting $F_{\w,k,j}:=\{\nu\in S^{n-1}\mid |\nu-\w|\leq \kappa 2^{-k/2}\}$ for $j<-M+k/2$ and a suitable $\kappa>0$. Moreover, one has to use the supremum bounds for $(a_{k})_{k=0}^{\infty}$ in \eqref{eq:abounds}. Then similar arguments yield an alternative proof of Lemma \ref{lem:smoothpseudo}.
\end{remark}

Through an application of Lemma \ref{lem:smoothing}, one obtains the following generalization of Theorem \ref{thm:critical}. Note that \eqref{eq:combined} is an improvement of \eqref{eq:subcritical}, although it holds for a slightly smaller range of $s$. 

\begin{corollary}\label{cor:combined}
Let $r>0$, $m\in\R$, $\delta\in[0,1/2]$, $p\in(1,\infty)$ and $a\in C^{r}_{*}S^{m}_{1,\delta}$. For $\veps>0$, set 
\[
\rho:=\begin{cases}
0&\text{if }2s(p)(1-\frac{r}{n-1})\leq(\frac{1}{2}-\delta)r\text{ and }\delta\neq \frac{1}{2},\\
0&\text{if }r>n-1\text{ and }\delta=\frac{1}{2},\\
\veps&\text{if }r=n-1\text{ and }\delta=\frac{1}{2},\\
2s(p)(1-\frac{r}{n-1})-(\frac{1}{2}-\delta)r&\text{otherwise},
\end{cases}
\]
and let $\gamma$ be as in Theorem \ref{thm:critical}. Then 
\begin{equation}\label{eq:combined}
a(x,D):\HT^{s+m+\rho,p}_{FIO}(\Rn)\to \Hps
\end{equation}
for $-(1-\gamma)r<s+s(p)<r$. If $a\in\Hrinf S^{0}_{1,\delta}$, then \eqref{eq:combined} also holds for $s+s(p)=r$. If $a\in \Hrinf (\Rn)$, then \eqref{eq:combined} holds for all $-(1-\gamma)r\leq s+s(p)\leq r$, with $m=\delta=0$.
\end{corollary}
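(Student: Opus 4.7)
The plan is to combine the symbol smoothing procedure (Lemma \ref{lem:smoothing} with $\gamma=1/2$) with Theorem \ref{thm:critical} and Lemma \ref{lem:smoothpseudo}, in the same spirit as the proof of Corollary \ref{cor:subcritical} but using Theorem \ref{thm:critical} in place of Proposition \ref{prop:pseudoloss}. Concretely, write
\[
a=a^{\sharp}_{1/2}+a^{\flat}_{1/2},
\]
with $a^{\sharp}_{1/2}\in S^{m}_{1,1/2}$ and $a^{\flat}_{1/2}\in C^{r}_{*}S^{m-(1/2-\delta)r}_{1,1/2}$ by Lemma \ref{lem:smoothing}. The smooth piece is handled by Lemma \ref{lem:smoothpseudo}, which yields $a^{\sharp}_{1/2}(x,D):\HT^{s+m,p}_{FIO}(\Rn)\to \Hps$ for every $s\in\R$; since $\rho\geq 0$, this subsumes the bound on $\HT^{s+m+\rho,p}_{FIO}(\Rn)$.

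For the rough piece, I apply Theorem \ref{thm:critical} to $a^{\flat}_{1/2}$, whose order is $m-(1/2-\delta)r$ rather than $m$. With $\tau$ as in that theorem, this gives
\[
a^{\flat}_{1/2}(x,D):\HT^{s+m-(1/2-\delta)r+\tau,p}_{FIO}(\Rn)\to\Hps
\]
for the same range $-(1-\gamma)r<s+s(p)<r$ appearing in Theorem \ref{thm:critical}. To conclude, I need the embedding $\HT^{s+m+\rho,p}_{FIO}(\Rn)\subseteq \HT^{s+m-(1/2-\delta)r+\tau,p}_{FIO}(\Rn)$, i.e.\ the algebraic inequality $\rho\geq \tau-(1/2-\delta)r$. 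A direct case-by-case check confirms this: for $\delta=1/2$ the two quantities are equal, and for $\delta<1/2$ one has $\rho=\max(0,2s(p)(1-r/(n-1))-(1/2-\delta)r)$, which dominates $\tau-(1/2-\delta)r$ in each of the three cases $r>n-1$, $r=n-1$ (after choosing $\veps$ sufficiently small), and $r<n-1$ defining $\tau$. This gives \eqref{eq:combined} in the stated range.

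For the endpoint $s+s(p)=r$ under the stronger assumption $a\in\Hrinf S^{m}_{1,\delta}$, Lemma \ref{lem:smoothing} additionally gives $a^{\flat}_{1/2}\in\Hrinf S^{m-(1/2-\delta)r}_{1,1/2}$, and the corresponding upgrade in Theorem \ref{thm:critical} extends \eqref{eq:combined} to that boundary value. For $a\in\Hrinf(\Rn)$, we have $m=\delta=0$, so $a=a^{\sharp}_{1/2}+a^{\flat}_{1/2}$ where the rough part is exactly of the form $b^{\flat}_{1/2}$ with $b=a\in\Hrinf(\Rn)$; the final clause of Theorem \ref{thm:critical} then delivers the lower endpoint $s+s(p)=-(1-\gamma)r$ for the rough piece, while Lemma \ref{lem:smoothpseudo} handles the smooth piece at every $s$, completing the full range $-(1-\gamma)r\leq s+s(p)\leq r$.

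The proof is largely a bookkeeping exercise: no new analytic ideas are required beyond those already encapsulated in Lemma \ref{lem:smoothing}, Lemma \ref{lem:smoothpseudo}, and Theorem \ref{thm:critical}. The only mildly delicate point is the algebraic verification that the parameter $\rho$ defined by the piecewise formula in the statement truly equals $\max(0,\tau-(1/2-\delta)r)$ (with the $\veps$ modification at the critical threshold $r=n-1,\delta=1/2$), and that the Sobolev range $-(1-\gamma)r<s+s(p)<r$ from Theorem \ref{thm:critical} is preserved after shifting by $m-(1/2-\delta)r$. These verifications are routine once the case distinctions are laid out.
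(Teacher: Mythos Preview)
Your proposal is correct and follows essentially the same route as the paper: decompose $a=a^{\sharp}_{1/2}+a^{\flat}_{1/2}$ via Lemma~\ref{lem:smoothing}, handle the smooth part with Lemma~\ref{lem:smoothpseudo}, the rough part with Theorem~\ref{thm:critical} (at order $m-(1/2-\delta)r$), and verify that $\rho=\max(0,\tau-(1/2-\delta)r)$ so that $\HT^{s+m+\rho,p}_{FIO}\subseteq\HT^{s+m-(1/2-\delta)r+\tau,p}_{FIO}$. The endpoint cases are also treated identically, invoking the corresponding endpoint clauses of Theorem~\ref{thm:critical} for $a^{\flat}_{1/2}$.
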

Note that $\rho=\max(0,\tau-(\frac{1}{2}-\delta)r)$, where $\tau$ is as in Theorem \ref{thm:critical}.
\begin{proof}
Use Lemma \ref{lem:smoothing} to write $a=a^{\sharp}_{1/2}+a^{\flat}_{1/2}$ with $a^{\sharp}_{1/2}\in S^{m}_{1,1/2}$ and $a^{\flat}_{1/2}\in C^{r}_{*}S^{m-(1/2-\delta)r}_{1,1/2}$. By Lemma \ref{lem:smoothpseudo}, or Theorem \ref{thm:critical}, one has
\[
a^{\sharp}_{1/2}(x,D):\HT^{s+m+\rho,p}_{FIO}(\Rn)\subseteq\HT^{s+m,p}_{FIO}(\Rn)\to\Hps
\]
for all $s\in\R$. Moreover, by Theorem \ref{thm:critical},
\[
a^{\flat}_{1/2}(x,D):\HT^{s+m+\rho,p}_{FIO}(\Rn)\subseteq\HT^{s+m-(1/2-\delta)r+\tau,p}_{FIO}(\Rn)\to\Hps
\]
for $-(1-\gamma)r<s+s(p)<r$. The final two statements follow from the final two statements in Theorem \ref{thm:critical}, applied to $a^{\flat}_{1/2}$.
\end{proof}

\section*{Acknowledgments}

The author would like to thank Andrew Hassell for many useful conversations about the article, and both Andrew Hassell and Pierre Portal for valuable advice. The author is also grateful to Dorothee Frey for a conversation regarding the use of the anisotropic multiplier theorem in Lemma \ref{lem:multiplier}, and to the anonymous referee for various helpful comments.

\bibliographystyle{plain}
\bibliography{Bibliography}

\end{document}